\documentclass[a4paper,11pt]{amsart}

\usepackage{amsmath}
\usepackage{amssymb}
\usepackage{amsthm}
\usepackage{verbatim}
\usepackage[all]{xy}
\usepackage{enumerate}
\usepackage{mathbbol}

\usepackage{color}

\newtheorem{thm}{Theorem}[section]
\newtheorem{prop}[thm]{Proposition}
\newtheorem{cor}[thm]{Corollary}
\newtheorem{lem}[thm]{Lemma}

\theoremstyle{definition}

\newtheorem{dfn}[thm]{Definition}

\newtheorem{rmk}[thm]{Remark}

\numberwithin{equation}{section}
\textwidth 6.4truein
\textheight 8.5truein
\oddsidemargin 0pt
\evensidemargin 0pt
\pagestyle{myheadings}

\newcommand{\bG}{\mathbb{G}}
\newcommand{\bK}{\mathbb{K}}

\newcommand{\id}{\textrm{id}}

\newcommand{\bH}{\mathsf{H}}

\newcommand{\bT}{\mathsf{T}}

\newcommand{\tr}{\textup{tr}}

\newcommand{\N}{\mathbb{N}}

\newcommand{\bc}{\mathbb{C}}
\newcommand{\bz}{\mathbb{Z}}
\newcommand{\br}{\mathbb{R}}
\newcommand{\Irr}{{\rm Irr}}
\newcommand{\Pol}{{\rm Pol}}

\newcommand{\ltwo}{\ell^2(\GGamma)}
\newcommand{\lone}{\ell^1(\GGamma)}
\newcommand{\bt}{\mathbb{T}}

\newcommand{\Linf}{\ell^\infty(\GGamma)}

\newcommand{\ot}{\otimes}

\newcommand{\supp}{{\rm supp}}

\newcommand{\cA}{\mathcal{A}}

\newcommand{\GGamma}{\mathbb{\Gamma}}
\newcommand{\bGGamma}{\widehat{\GGamma}}

\DeclareMathOperator{\C}{C}


\title[Quantum $\C^\ast$-uniqueness]{On $\C^*$-completions of discrete quantum group rings}

 
\author{Martijn Caspers}
\address{TU Delft, EWI/DIAM, 
	P.O.Box 5031, 
	2600 GA Delft, 
	The Netherlands}
\email{m.p.t.caspers@tudelft.nl}

\author{Adam Skalski}
\address{Institute of Mathematics of the Polish Academy of Sciences,
	ul.~\'Sniadeckich 8, 00--656 Warszawa, Poland}
\email{a.skalski@impan.pl}

\subjclass[2000]{Primary 46L05; Secondary 17B37}
\keywords{discrete quantum group; just infinitess; $\C^*$-completions}

\begin{document}

\begin{abstract}
	We discuss just infiniteness of $\C^*$-algebras associated to discrete quantum groups and relate it to the $\C^*$-uniqueness of the quantum groups in question, i.e.\ to the uniqueness of a $\C^*$-completion of the underlying Hopf $^*$-algebra. It is shown that duals of $q$-deformations of simply connected semisimple compact Lie groups  are never $\C^*$-unique. On the other hand we present an example of a discrete quantum group which is not locally finite and yet is $\C^*$-unique.
\end{abstract}

\maketitle


\section{Introduction}
 
The following definition, modelled on the notion of just infiniteness for groups, was introduced by R.\,Grigorchuk, M.\,Musat and M.\,R\o rdam in the recent paper \cite{GrigorchukMusatRordam}:
a $\C^*$-algebra $A$ is called \emph{just infinite} if all its proper quotients are finite-dimensional. Just infinite $\C^*$-algebras were classified in \cite{GrigorchukMusatRordam} in terms of their ideal spaces. For group $\C^*$-algebras it was shown that this notion is closely related to the existence of `nontrivial' $\C^*$-completions of group rings (see Corollary \ref{corfulljustinfinite} below). In particular it was observed in \cite{GrigorchukMusatRordam} that the only obvious obstruction for the existence of different   $\C^*$-completions of a group ring is local finiteness of the underlying group. This naturally raises a question whether for any discrete group $\Gamma$ which is not locally finite one can construct  different   $\C^*$-completions of $\bc[\Gamma]$; if the latter holds we say for short that $\Gamma$ is not $\C^*$-unique. The preprint \cite{AlexeevKyed} takes some steps towards verifying this conjecture, producing different completions of group rings for various classes of discrete groups, such as infinite groups of polynomial growth or groups with a central element of infinite order.
 
Our note considers analogous questions in the setup of discrete \emph{quantum} groups. If $\GGamma$ is a discrete quantum group, one can still consider the associated quantum group ring $\bc[\GGamma]$ and its reduced and universal $\C^*$-completions $\C^*_r(\GGamma)$ and $\C^*(\GGamma)$. Again one can ask about just infiniteness of these $\C^*$-algebras; as in the classical case it is not difficult to see that these are connected first to amenability of $\GGamma$, and then to the uniqueness of $\C^*$-completions of $\bc[\GGamma]$. This naturally raises the question which discrete quantum groups are $\C^*$-unique and whether the equivalence of $\C^*$-uniqueness and local finiteness has a chance to be true in the quantum realm (the backward implication obviously holds). We show by an explicit construction  that the duals of so-called $q$-deformations of classical compact Lie groups, such as Woronowicz's $SU_q(2)$, are never $\C^*$-unique. On the other hand we answer the above question in the negative, by showing that a certain crossed product construction, combining $SU_q(2)$ and the noncommutative torus, yields a discrete quantum group which is not locally finite and yet is $\C^*$-unique.

The plan of the paper is as follows: in Section \ref{Sect=QG} we recall some quantum group terminology, establish equivalence of amenability of a discrete quantum group $\GGamma$ with $\C^*_r(\GGamma)$ admitting a finite dimensional representation, discuss some basic facts related to just infiniteness of quantum group operator algebras and note that locally finite discrete quantum groups are $\C^*$-unique. In Section \ref{Sect=qDef} we show that the duals of $q$-deformations are never $\C^*$-unique. Finally in Section \ref{Sect=Example} we produce an example of a $\C^*$-unique discrete quantum group which is not locally finite.

All $^*$-algebras we study will be unital, and by a representation of a $^*$-algebra $A$ we will understand a unital $^*$-homomorphism $\pi:A \to B(\bH)$, where $\bH$ is a Hilbert space. We write $\N_0$ for $\N \cup\{0\}$. Scalar products will be linear on the right. When we talk about a $\C^*$-norm on a $^*$-algebra strictly speaking we mean a pre-$\C^*$-norm.

\section{Discrete quantum groups and just infiniteness of their group $\C^*$-algebras}\label{Sect=QG}

Throughout the paper $\bG$ will denote a compact quantum group in the sense of Woronowicz, and $\GGamma:=\widehat{\bG}$ will be the discrete quantum group dual to $\bG$. For precise definitions and all the related terminology we refer (for example) to Section 2 of \cite{DKSS}; we will follow the conventions of that paper. We will be mainly interested in the quantum group ring $\bc[\GGamma]$ (in other words the Hopf $^*$-algebra $\Pol(\bG)$), and its reduced and universal completions $\C^*_r(\GGamma)$ and $\C^*(\GGamma)$ (in other words $\C(\bG)$ and $\C^u(\bG)$), with the latter being the universal enveloping $\C^*$-algebra of $\bc[\GGamma]$. The reduced algebra $\C^*_r(\GGamma)$ acts on the Hilbert space $\ell^2(\GGamma)$, as does the `algebra of bounded functions on $\GGamma$', the von Neumann algebra $\ell^\infty(\GGamma)$. The predual of the latter is denoted $\lone$.


 Recall that
\[ \ell^\infty (\GGamma)= \prod_{\alpha\in\Irr_{\bG}} M_{n_{\alpha}},\]
where $\Irr_{\bG}$ denotes the set of equivalence classes of irreducible unitary representations of $\bG$. For each $\alpha \in \Irr_{\bG}$ we choose a representative, i.e.\ a unitary matrix $U^{\alpha} = (u^{\alpha}_{i,j})_{i,j=1}^{n_{\alpha}} \in M_{n_{\alpha}}(\bc[\GGamma])$. The matrix units in $M_{n_{\alpha}}\subset \ell^\infty(\GGamma)$ will be denoted by $e_{i,j}^{\alpha}$.

The multiplicative unitary of $\GGamma$ is the unitary $W\in B(\ltwo \ot \ltwo)$ given by the formula:
\[ W = \sum_{\alpha \in \Irr_{\bG}} \sum_{i,j=1}^{n_{\alpha}}e_{j,i}^{\alpha} \ot (u^{\alpha}_{i,j})^*\]

The coproduct of $\GGamma$, a coassociative  normal unital $^*$-homomorphism $\Delta:\Linf \to \Linf \overline{\ot} \Linf$, is implemented by $W$ via the following formula:
\begin{equation}
\Delta(x) = W^* (1 \ot x) W, \;\;\; x \in \Linf.
\end{equation}
Given a functional $\phi \in \lone$ we define the (normal, bounded) maps $L_\phi: \Linf \to \Linf$ and $R_\phi:\Linf \to \Linf$  via the formulas
\[ L_\phi= (\phi \ot \id)\circ \Delta, \;\;\; R_\phi= (\id \ot \phi)\circ \Delta.\]

A discrete quantum group $\GGamma$ is called \emph{amenable} if it admits a bi-invariant mean, i.e.\ a state $m\in \Linf^*$, such that for all $\phi \in \lone$
there is
\[m \circ L_{\phi} = m \circ R_{\phi} = \phi(1) m.\]

By \cite{Vaesetal} a discrete quantum group $\GGamma$ is amenable if it admits a left invariant mean $m\in \Linf^*$: a state such that for each $\phi \in \lone$
there is $m \circ L_{\phi} =  \phi(1) m$. In fact it suffices to check the last formula for the functionals of the form $\widehat{e_{i,j}^{\alpha}}$, $\alpha \in \Irr_{\bG}$, $i,j=1,\ldots,n_{\alpha}$, as the latter are linearly dense in $\lone$, and the map $\phi \mapsto L_\phi$ is a (complete) isometry. Thus we will need the following explicit form of the map $L_{\phi}$ for $\phi=\widehat{e_{i,j}^{\alpha}}$:
\begin{equation} \label{Lphi}
L_{\phi} (x) =
\sum_{p=1}^{n_{\alpha}} u^{\alpha}_{i,p}  x (u^{\alpha}_{j,p})^*, \;\;\;\;\;\; x \in l^{\infty}(\GGamma).
\end{equation}

A pre-$\C^\ast$-norm on $\mathbb{C}[\GGamma]$ is a norm such that the completion of $\mathbb{C}[\GGamma]$ is a $\C^\ast$-algebra.  

\begin{dfn}
	We call a discrete quantum group $\GGamma$ $\C^\ast$-unique if $\mathbb{C}[\GGamma]$ (i.e. $\Pol(\widehat{\GGamma})$) has a unique pre-$\C^\ast$-norm. We call $\GGamma$ $\C^\ast_r$-unique if there is no pre-$\C^\ast$-norm on  $\mathbb{C}[\GGamma]$  that is properly majorized by the norm of $\C^\ast_r(\GGamma)$. 
\end{dfn}

We have the canonical quotient map $\Lambda:\C^*(\GGamma)\to \C^*_r(\GGamma)$. The following is a combination of results in \cite{BMT} and \cite{Tomatsu}.

\begin{thm} \label{amenableknown}
Let $\GGamma$ be a discrete quantum group. The following conditions are equivalent:
\begin{itemize}
	\item[(i)] $\GGamma$ is amenable;
	\item[(ii)] the quotient map $\Lambda:\C^*(\GGamma)\to \C^*_r(\GGamma)$ is an isomorphism;
	\item[(iii)] the algebra $\C^*_r(\GGamma)$ admits a character.
\end{itemize}	
\end{thm}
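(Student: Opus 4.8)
The plan is to close the cycle of implications (i) $\Rightarrow$ (ii) $\Rightarrow$ (iii) $\Rightarrow$ (i), using \cite{BMT} for the equivalence of the first two conditions and \cite{Tomatsu} for bringing in the third. For (i) $\Rightarrow$ (ii) one runs the compact quantum group version of Hulanicki's theorem: a left invariant mean on $\Linf$ produces, by the usual convexity argument, a net of unit vectors in $\ltwo$ along which the regular corepresentation $W$ of $\bG=\hbG$ asymptotically fixes each vector, i.e.\ the trivial corepresentation is weakly contained in $W$. Tensoring this weak containment with any corepresentation $V$ (which for $\bG$ compact splits into finite dimensional pieces) and applying the Fell-absorption isomorphism $W\ot V\cong W^{\oplus\dim V}$, one obtains $V\prec W$ for \emph{every} corepresentation $V$; hence the universal and the reduced $\C^*$-norms on $\bc[\GGamma]=\Pol(\bG)$ agree and $\Lambda$ is an isomorphism. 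The implication (ii) $\Rightarrow$ (iii) is immediate: $\C^*(\GGamma)$ always carries the counit $\epsilon$ as a character, which an isomorphic $\Lambda$ transports to $\C^*_r(\GGamma)$.

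The substantial content is (iii) $\Rightarrow$ (i), for which I would follow \cite{Tomatsu}. Let $\chi$ be a character of $\C^*_r(\GGamma)=\C(\bG)$, with restriction $\chi_0$ to the dense Hopf $^*$-algebra $\Pol(\bG)$. The crucial step is that $\chi_0$ is automatically invariant under the scaling group $\tau$. Indeed, applying $\chi_0$ to the antipode identities $\sum a_{(1)}S(a_{(2)})=\epsilon(a)1=\sum S(a_{(1)})a_{(2)}$ shows that $\chi_0\circ S$ is a two-sided convolution inverse of $\chi_0$; applying the same remark to the character $\chi_0\circ S$ and invoking uniqueness of convolution inverses gives $\chi_0\circ S^2=\chi_0$. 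Since $S^2$ acts on the block of a representation $\alpha\in\Irr_{\bG}$ by conjugation with the Woronowicz matrix $F_\alpha$, this says exactly that $F_\alpha$ commutes with the unitary $(\chi_0(u^{\alpha}_{i,j}))_{i,j}$ for every $\alpha$, which is equivalent to $\chi_0\circ\tau_z=\chi_0$ for all $z\in\bc$ (the scaling group acting algebraically on $\Pol(\bG)$). Consequently $\chi_0\circ S=\chi_0\circ R\circ\tau_{-i/2}=\chi_0\circ R$, where $R$ is the unitary antipode; as $R$ is an isometric $^*$-anti-automorphism of $\C(\bG)$, the functional $\bar\chi:=\chi\circ R$ is a character of $\C^*_r(\GGamma)$ whose restriction to $\Pol(\bG)$ is precisely the convolution inverse of $\chi_0$. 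Hence on $\Pol(\bG)$ one has
\[
\epsilon \;=\; \chi\circ(\id\ot\bar\chi)\circ\Delta ,
\]
and the right-hand side is a composition of bounded maps, namely $\Delta\colon\C(\bG)\to\C(\bG)\minotimes\C(\bG)$, the slice $\id\ot\bar\chi$, and $\chi$. Thus $\epsilon$ extends to a bounded (hence unital and $^*$-homomorphic) functional on $\C^*_r(\GGamma)$, and by \cite{BMT} the existence of a bounded counit on $\C^*_r(\GGamma)$ is equivalent to amenability of $\GGamma$, which closes the cycle.

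The one genuinely delicate ingredient is this scaling-group invariance of a character and the ensuing boundedness of its convolution inverse. In the unimodular (Kac) case --- in particular for classical discrete groups --- one has $S^2=\id$, the scaling group is trivial, and $S=R$ is bounded, so this input is vacuous and the argument collapses to the familiar Fell-absorption proof; it is precisely the modular/scaling bookkeeping in the non-Kac case that \cite{Tomatsu} contributes on top of \cite{BMT}. All other ingredients above --- that $\Delta$ maps $\C(\bG)$ into the minimal tensor product, that $R$ is an isometric $^*$-anti-automorphism of $\C(\bG)$, and the Hopf-$^*$-algebra identities on $\Pol(\bG)$ --- are part of the standard structure theory of compact quantum groups.
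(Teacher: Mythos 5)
The paper offers no proof of this theorem: it is stated as ``a combination of results in \cite{BMT} and \cite{Tomatsu}'', so what can be assessed is whether your outline correctly identifies what is easy, what is hard, and where each ingredient lives. Your cycle (i)$\Rightarrow$(ii)$\Rightarrow$(iii)$\Rightarrow$(i) has the right shape, (ii)$\Rightarrow$(iii) via the counit is fine, and your (iii)$\Rightarrow$(i) argument --- scaling invariance of a character via uniqueness of convolution inverses, boundedness of $\chi\circ R$, and $\epsilon=(\chi\otimes(\chi\circ R))\circ\Delta$ --- is essentially sound and is very much in the spirit of Theorem 2.8 of \cite{BMT} and of Lemma \ref{DDSLemma} and Corollary \ref{twistformula} of the paper.

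The genuine gap is (i)$\Rightarrow$(ii), which you dispose of in two sentences as ``the usual convexity argument'' plus Fell absorption. Fell absorption is indeed available for quantum groups, but the passage from a left invariant mean on $\ell^\infty(\GGamma)$ to a net of unit vectors in $\ell^2(\GGamma)$ almost invariant for $W$ is exactly where the classical proof breaks down in the non-Kac case: Day's convexity argument yields a net of normal states $\omega_i\in\ell^1(\GGamma)$ with $\|L_\phi^*\omega_i-\phi(1)\omega_i\|\to 0$, but the Powers--St{\o}rmer square-root step that should convert this into $\ell^2$-almost-invariance is twisted by the modular element of the non-tracial Haar weight, so the vectors one obtains are almost invariant only for a modular deformation of the regular representation. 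Bridging that is precisely the main theorem of \cite{Tomatsu}; the naive almost-invariant-vector argument works only when the relevant Haar weights are tracial. Relatedly, your attribution is backwards: you call (iii)$\Rightarrow$(i) ``the substantial content'' and locate Tomatsu's modular bookkeeping there, whereas (iii)$\Rightarrow$(i) is Theorem 2.8 of \cite{BMT} (as the paper itself notes just before Proposition \ref{amenabilityfdreps}), and the direction that genuinely requires \cite{Tomatsu} is (i)$\Rightarrow$(ii). As written, the proposal establishes the easier two-thirds of the theorem and waves at the hard third.
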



 We will need the following lemma, due to Biswarup Das, Matt Daws and Pekka Salmi. 

\begin{lem}[Das, Daws and Salmi]\label{DDSLemma}
Suppose that $\GGamma$ is a discrete quantum group, $n \in \N$ and $\pi: \bc[\GGamma] \to M_n$ is a representation. Then $\pi$ is invariant under the scaling group: $\pi = \pi \circ \tau_t$, $t \in \br$.
\end{lem}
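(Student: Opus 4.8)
The plan is to reduce the claim, one irreducible corepresentation at a time, to an elementary positivity statement about a single unitary matrix, using only two standard facts about the Woronowicz $F$-matrices. Recall that for each $\alpha\in\Irr_\bG$ there is a positive invertible $F_\alpha\in M_{n_\alpha}$ (say, normalised by $\tr(F_\alpha)=\tr(F_\alpha^{-1})$) with
\[
(\id\ot\tau_t)(U^\alpha)=(F_\alpha^{it}\ot 1)\,U^\alpha\,(F_\alpha^{-it}\ot 1)\qquad(t\in\br),
\]
and such that the entrywise adjoint $\overline{U^\alpha}:=\big((u^\alpha_{i,j})^*\big)_{i,j}\in M_{n_\alpha}\ot\bc[\GGamma]$ is turned into a unitary by the conjugation $(F_\alpha^{1/2}\ot 1)\,\overline{U^\alpha}\,(F_\alpha^{-1/2}\ot 1)$ (see Woronowicz's original work, or the monographs of Timmermann and of Neshveyev--Tuset). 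Since the $u^\alpha_{i,j}$ span $\bc[\GGamma]$, the identity $\pi=\pi\circ\tau_t$ for all $t\in\br$ is equivalent to the assertion that for every $\alpha$ the unitary $V:=(\id\ot\pi)(U^\alpha)\in M_{n_\alpha}\ot M_n$ commutes with $F_\alpha\ot 1$. (The precise placement of the exponents and of the factors $F_\alpha^{\pm1/2}$ above is irrelevant here, as this condition is insensitive to it.)

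Next I would apply the unital $^*$-homomorphism $\id\ot\pi$ to the two facts above and keep just two consequences: $V$ is unitary, and $Z:=(F_\alpha^{1/2}\ot 1)\,\overline{V}\,(F_\alpha^{-1/2}\ot 1)$ is unitary, where $\overline{V}:=(\id\ot\pi)(\overline{U^\alpha})$ is the entrywise adjoint of $V$. Choosing an orthonormal basis of $\bc^{n_\alpha}$ in which $F_\alpha=\operatorname{diag}(f_1,\dots,f_{n_\alpha})$, $f_a>0$, and writing $V=\sum_{a,b}e_{a,b}\ot V_{a,b}$ with $V_{a,b}\in M_n$, I would multiply out $VV^*=V^*V=1$ and $ZZ^*=Z^*Z=1$, read off the diagonal matrix entries, and take the trace on $M_n$. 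This yields, for each $a$, the four scalar relations
\[
\sum_b x_{a,b}=n,\qquad \sum_b x_{b,a}=n,\qquad \sum_b f_b^{-1}x_{a,b}=f_a^{-1}n,\qquad \sum_b f_b\,x_{b,a}=f_a n,
\]
where $x_{a,b}:=\tr(V_{a,b}V_{a,b}^*)\ge 0$.

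The remaining task is to show that $x_{a,b}=0$ whenever $f_a\neq f_b$: this forces $V_{a,b}=0$ for such $a,b$, i.e.\ $V$ is block diagonal for the eigenspace decomposition of $F_\alpha$ and hence commutes with $F_\alpha\ot 1$, completing the proof. This last step is a short positivity argument. If $f_a$ is a maximal value of $f$, subtracting $f_a^{-1}$ times the first relation from the third gives $\sum_b (f_b^{-1}-f_a^{-1})x_{a,b}=0$ with every coefficient nonnegative, hence $x_{a,b}=0$ whenever $f_b<f_a$; likewise the second and fourth relations give $x_{b,a}=0$ whenever $f_b<f_a$. Thus the set of indices on which $f$ attains its largest value decouples, the four relations restrict (with the same right-hand side $n$) to the complementary index set, and one finishes by induction on the number of distinct eigenvalues of $F_\alpha$.

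The real content — and the main point to get right — is the distillation carried out in the first two paragraphs: recognising that unitarity of $V$ and of the $F_\alpha$-twisted conjugate $Z$, together with positivity of the numbers $\tr(V_{a,b}V_{a,b}^*)$, is already enough, so that one never needs the corepresentation algebra of $U^\alpha$ beyond these two unitarities, nor irreducibility of $\pi$. (One must keep the $F_\alpha$-conventions straight, but nothing deep is hidden there.) Finite-dimensionality of $\pi$ is used in exactly one place, and there it is essential: it is what lets one pass from the operator identities expressing unitarity of $V$ and $Z$ to the scalar identities above, after which $x_{a,b}\ge 0$ closes the argument.
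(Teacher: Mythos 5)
Your proof is correct, but it takes a genuinely different route from the paper. The paper's argument is structural: it extends $\pi$ to $\C^*(\GGamma)$, invokes the Kac quotient of $\widehat{\GGamma}$ (So\l tan's appendix, Daws) to factor any finite-dimensional representation through a Kac-type quantum group, and then uses that Kac-type quantum groups have trivial scaling group together with the fact that quantum group morphisms intertwine the universal scaling groups. You instead work entirely inside $\Pol(\bG)$, one irreducible $U^\alpha$ at a time, and derive the commutation of $V=(\id\ot\pi)(U^\alpha)$ with $F_\alpha\ot 1$ from the two unitarity statements plus a trace--positivity argument; I checked the four scalar relations and the descending induction on the eigenvalues of $F_\alpha$, and they are sound (and, as you note, robust under the various sign/placement conventions for $F_\alpha$, since only commutation with $F_\alpha\ot 1$ is at stake --- one should just be careful that this robustness would fail for the modular group, which conjugates by $F_\alpha^{it}$ on both sides with the same sign). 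In effect you have inlined the proof of the cited factorisation result [Daws, Proposition 6.5] rather than quoting it. What the paper's route buys is brevity and a conceptual explanation (finite-dimensional representations only see the Kac part of $\bG$), at the cost of importing the quantum Bohr compactification/Kac quotient machinery and the universal $\C^*$-algebra; what your route buys is a self-contained, elementary argument that never leaves $\bc[\GGamma]$ and that pinpoints exactly where finite-dimensionality is used, namely in passing from operator to scalar identities via the trace. Either proof is acceptable for the paper.
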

\begin{proof}
Let $\GGamma$ and $\pi: \C[\GGamma] \to M_n$ be as above. Naturally the representation $\pi$ extends to a representation of $\C^*(\GGamma)$, which we denote by the same symbol. Consider the Kac quotient of $\widehat{\GGamma}$, $\widehat{\GGamma}_\textup{KAC}$, as defined in the appendix of \cite{Soltan} (with the idea attributed to Vaes), and studied later for example in \cite{Daws}, and denote its dual by $\mathbb{\Lambda}$. There is a canonical unital $^*$-homomorphism $\rho:\C^*(\GGamma) \to \C^*(\mathbb{\Lambda})$, intertwining the respective comultiplications. Moreover, as discussed in the Appendix of \cite{Soltan} or in \cite[Proposition 6.5]{Daws}, the representation $\pi$ factorises via $\rho$, so that there is a representation $\pi': \C^*(\mathbb{\Lambda}) \to M_n$ such that $\pi = \pi' \circ \rho$. Now as the scaling group of a Kac type compact quantum group is trivial, and by \cite[Remark 12.1]{Kustermans} the quantum group morphism intertwines the respective (universal) scaling groups, we have
$\rho = \rho \circ \tau_t$, $t \in \br$. The result follows. 
\end{proof}

The above lemma has a simple corollary, using the properties of the unitary and the usual antipode of $\bc[\Gamma]$.

\begin{cor} \label{twistformula}
Suppose that $\GGamma$ is a discrete quantum group, $n \in \N$, $\pi: \bc[\GGamma] \to M_n$ is a representation, and $U=(u_{ij})_{i,j=1}^k\in M_k(\bc[\GGamma])$ is a finite-dimensional unitary representation of $\widehat{\GGamma}$. Then we have the following equality ($i,j=1, \ldots,k$):
\[\pi\left(\sum_{p=1}^k u_{j,p}^* u_{i,p} \right) = \delta_{ij}1_{M_n}.\] 
\end{cor}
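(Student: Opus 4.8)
The plan is to leverage Lemma \ref{DDSLemma}: first to promote the scaling-group invariance of $\pi$ to invariance under $S^2$ (the square of the antipode $S$ of $\bc[\GGamma] = \Pol(\widehat{\GGamma})$), and then to feed this into the standard Hopf $^*$-algebraic identities for the entries of a unitary corepresentation.

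Step one is to establish $\pi = \pi \circ S^2$ on $\bc[\GGamma]$. One uses that $S^2 = \tau_{-i}$, the value at $z = -i$ of the analytic extension $(\tau_z)_{z \in \bc}$ of the scaling group, together with the fact that on $\Pol(\widehat{\GGamma})$ this extension has entire, finite-dimensional orbits: for $a \in \Pol(\widehat{\GGamma})$ the orbit map $z \mapsto \tau_z(a)$ is an entire $\Pol(\widehat{\GGamma})$-valued function taking values in the span of the matrix coefficients of the finitely many irreducibles occurring in $a$. Hence $z \mapsto \pi(\tau_z(a)) \in M_n$ is entire, is constantly equal to $\pi(a)$ on the real axis by Lemma \ref{DDSLemma}, and therefore equals $\pi(a)$ for all $z$; taking $z = -i$ gives $\pi(S^2(a)) = \pi(a)$.

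Step two is the computation proper. For the entries of any unitary corepresentation one has $S(u_{pj}) = u_{jp}^*$, while the antipode axiom applied to $u_{ij}$ reads $\sum_p S(u_{ip}) u_{pj} = \delta_{ij} 1$. Applying the algebra anti-automorphism $S^{-1}$ to the latter identity, and simplifying $S^{-1}(u_{pj}) = (S(u_{pj}^*))^* = (S^2(u_{jp}))^*$ by means of the Hopf $^*$-algebra relation $S^{-1}(a) = (S(a^*))^*$ and $u_{pj}^* = S(u_{jp})$, one arrives at
\[ \sum_{p=1}^{k} \big( S^2(u_{jp}) \big)^* \, u_{ip} \;=\; \delta_{ij} 1 \qquad \text{in } \bc[\GGamma]. \]
Now apply $\pi$. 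Since $\pi$ is a unital $^*$-homomorphism and $\pi \circ S^2 = \pi$ by step one, we get $\pi\big( (S^2(u_{jp}))^* \big) = \pi(S^2(u_{jp}))^* = \pi(u_{jp})^*$, so the displayed identity becomes $\sum_p \pi(u_{jp})^* \pi(u_{ip}) = \delta_{ij} 1_{M_n}$, i.e.\ $\pi\big( \sum_p u_{jp}^* u_{ip} \big) = \delta_{ij} 1_{M_n}$, as required.

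I expect the main obstacle to be step one — replacing the real parameter in $\tau_t$ by the purely imaginary value $-i$. This is justified because on the polynomial algebra the scaling group restricts to a genuinely entire one-parameter group of automorphisms with finite-dimensional orbits, so invariance of $\pi$ under $\tau_t$ for all real $t$ (Lemma \ref{DDSLemma}) propagates to $\tau_{-i} = S^2$ by analytic continuation. With $\pi \circ S^2 = \pi$ in hand, step two is a routine manipulation with the antipode and the unitarity of $U$.
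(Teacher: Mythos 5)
Your proof is correct and takes essentially the same route as the paper's: both rest on Lemma \ref{DDSLemma} together with analytic continuation of the scaling group on the polynomial algebra — you phrase the resulting invariance as $\pi\circ S^2=\pi$ via $S^2=\tau_{-i}$, while the paper uses the polar decomposition $S=R\tau_{-i/2}$ to get the equivalent statement $\pi\circ R=\pi\circ S$. The concluding manipulation with the antipode identities for the entries of a unitary corepresentation and the unitarity of $U$ is the same in substance.
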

\begin{proof}
By Lemma \ref{DDSLemma} the representation $\pi$ is invariant under the scaling group action. This implies that if we restrict $\pi$ to $\bc[\GGamma]$ then we have $\pi \circ R = \pi \circ S$, where $R$ and $S$ denote respectively the unitary and the usual antipode of $\bc[\GGamma]$. We then use the fact that $R^2=\id$ and both antipodes commute to calculate (say for $i,j,p = 1, \ldots,k$)
\[\pi(u_{i,p} ) = \pi \circ R^2(u_{i,p} ) = \pi \circ S \circ R(u_{i,p}) = \pi \circ R \circ S (u_{i,p}) = \pi \circ R(u_{p,i}^*) = \pi \circ S (u_{p,i}^*), \] so that 
\[ 	
\pi(u_{j,p}^* u_{i,p} ) = \pi (u_{j,p}^*) \pi (u_{i,p}) = \pi\circ S(u_{p,j}) \pi \circ S(u_{p,i}^*) =\pi (S (u_{p,j}) S(u_{p,i}^*)) = 
\pi (S(u_{p,i}^* u_{p,j}))\]
and finally
\[ \pi(\sum_{p=1}^k u_{j,p}^* u_{i,p} )	= \pi \circ S(\sum_{p=1}^k u_{p,i}^* u_{p,j} ) = \pi\circ S (\delta_{ij} 1_{\bc[\GGamma]}) = \delta_{ij} 1_{M_n},\]
where we used the unitarity of $U$ and the fact that the antipode is unital.	
\end{proof}

The following result is related to $\C^*_r(\GGamma)$ being just infinite; it extends Theorem 2.8 of \cite{BMT}, i.e.\ the implication (iii)$\Longrightarrow$(i) of Theorem \ref{amenableknown}. Note that the proof of that theorem in \cite{BMT} does not extend to the matrix case considered below; we use rather the idea of an Arveson extension, as in Section 2.6 of \cite{bo}. 
 
 \begin{prop} \label{amenabilityfdreps}
 	A discrete quantum group $\GGamma$ is amenable if and only if $\C^*_r(\GGamma)$ admits a finite-dimensional representation.
\end{prop}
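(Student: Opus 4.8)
The plan is to obtain the easy implication from Theorem~\ref{amenableknown} and to establish the substantial one by an Arveson extension argument in the spirit of \cite[Section~2.6]{bo}, with Corollary~\ref{twistformula} entering at the decisive step. For the trivial direction: if $\GGamma$ is amenable then by Theorem~\ref{amenableknown} the algebra $\C^*_r(\GGamma)$ admits a character, which is in particular a one-dimensional representation.

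For the converse, suppose $\pi:\C^*_r(\GGamma)\to M_n$ is a representation, and regard $\C^*_r(\GGamma)$ and $\Linf$ as acting on $\ell^2(\GGamma)$. I would first extend $\pi$, by Arveson's theorem, to a unital completely positive map $\Phi:B(\ell^2(\GGamma))\to M_n$. Since $\Phi$ coincides with the unital $^*$-homomorphism $\pi$ on the unital $\C^*$-subalgebra $\C^*_r(\GGamma)$, the multiplicative domain of $\Phi$ contains $\C^*_r(\GGamma)$, and therefore
\[\Phi(b\,x\,c)=\pi(b)\,\Phi(x)\,\pi(c),\qquad b,c\in\C^*_r(\GGamma),\ x\in B(\ell^2(\GGamma)).\]
Next I would set $m:=\tfrac{1}{n}\Tr_{M_n}\circ\,\Phi$, restricted to $\Linf$. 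As $\Phi$ is unital and positive and $\tfrac1n\Tr_{M_n}$ is a tracial state, $m$ is a state on $\Linf$, and the goal is to show that it is a left-invariant mean, after which amenability of $\GGamma$ follows from \cite{Vaesetal}.

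To check left invariance it is enough, just as recalled in the text preceding \eqref{Lphi}, to verify $m\circ L_\phi=\phi(1)\,m$ for the linearly dense family $\phi=\widehat{e_{i,j}^{\alpha}}$ with $\alpha\in\Irr_{\bG}$, $1\le i,j\le n_{\alpha}$ (here $\widehat{e_{i,j}^{\alpha}}(1)=\delta_{ij}$, as follows by comparing \eqref{Lphi} at $x=1$ with the identity $L_\phi(1)=\phi(1)1$ and using unitarity of $U^{\alpha}$). For such $\phi$ the computation I would carry out, applying in turn \eqref{Lphi}, the multiplicative-domain identity above, the trace property, and Corollary~\ref{twistformula} (applied to $\pi|_{\bc[\GGamma]}$ and the unitary $U^{\alpha}$), reads
\begin{align*}
m\big(L_{\widehat{e_{i,j}^{\alpha}}}(x)\big)
&=\tfrac1n\Tr\Big(\sum_{p=1}^{n_{\alpha}}\Phi\big(u^{\alpha}_{i,p}\,x\,(u^{\alpha}_{j,p})^*\big)\Big)
=\tfrac1n\Tr\Big(\sum_{p=1}^{n_{\alpha}}\pi(u^{\alpha}_{i,p})\,\Phi(x)\,\pi\big((u^{\alpha}_{j,p})^*\big)\Big)\\
&=\tfrac1n\Tr\Big(\pi\Big(\sum_{p=1}^{n_{\alpha}}(u^{\alpha}_{j,p})^*\,u^{\alpha}_{i,p}\Big)\Phi(x)\Big)
=\tfrac1n\Tr\big(\delta_{ij}\,\Phi(x)\big)=\delta_{ij}\,m(x)
\end{align*}
for every $x\in\Linf$, which is exactly $m\circ L_{\widehat{e_{i,j}^{\alpha}}}=\widehat{e_{i,j}^{\alpha}}(1)\,m$, as wanted.

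The one step that really has to be gotten right — and the reason the proof of \cite[Theorem~2.8]{BMT} does not formally extend — is the use of the \emph{normalised trace} on $M_n$ rather than composing $\Phi$ with an arbitrary state of $M_n$: it is the tracial property alone that allows cycling the matrix coefficients $u^{\alpha}_{i,p}$ past $\Phi(x)$, after which Corollary~\ref{twistformula} (and through it Lemma~\ref{DDSLemma}) can be brought to bear. The remaining ingredients — existence of the Arveson extension, the multiplicative-domain identity, and the density/continuity reduction to the functionals $\widehat{e_{i,j}^{\alpha}}$ — are routine.
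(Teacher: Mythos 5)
Your proof is correct and follows essentially the same route as the paper's: the Arveson extension $\Phi$ of $\pi$, the multiplicative-domain/bimodule identity, the normalised trace to cycle the matrix coefficients, and Corollary~\ref{twistformula} to produce $\delta_{ij}1_{M_n}$, yielding a left-invariant mean. The only cosmetic difference is that you spell out $\widehat{e_{i,j}^{\alpha}}(1)=\delta_{ij}$ and the role of traciality explicitly, which the paper leaves implicit.
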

 \begin{proof}
 	The forward implication is well-known (and follows for example from Theorem 2.8 of \cite{BMT} mentioned above).
 	
 	Suppose then that $\pi: \C^*_r(\GGamma)\to M_n$ is a representation. By Lemma \ref{DDSLemma} we know that $\pi$ is invariant under the scaling group. 
 	We view $\C^*_r(\GGamma)$ as a subalgebra of $B(\ell^2(\GGamma))$ and consider a unital completely positive extension of $\pi$ to the latter algebra, denoted by $\Phi$, so that we have $\Phi: B(\ell^2(\GGamma)) \to M_n$; note that by a standard multiplicative domain argument we know that $\Phi$ is a $\C^*_r(\GGamma)$-bimodule map in the obvious sense. We claim that the state $m: \ell^\infty (\GGamma)\to \bc$ defined by the formula $m = \tr \circ \Phi|_{ \ell^\infty (\GGamma)} $ is the desired left invariant mean.
 	
Fix then a functional $\phi \in \ell^1(\GGamma)$ of the form $\widehat{e_{i,j}^{\alpha}}$, $\alpha \in \Irr_{\bG}$, $i,j=1,\ldots,n_{\alpha}$	  and $x \in \ell^\infty(\GGamma)$ and compute, using the formula \eqref{Lphi}:
\begin{align*} m(L_\phi(x)) &= m \left(\sum_{p=1}^{n_{\alpha}} u^{\alpha}_{i,p}  x (u^{\alpha}_{j,p})^*\right) = 
\sum_{p=1}^{n_{\alpha}} (\tr \circ \Phi) \left( u^{\alpha}_{i,p}  x (u^{\alpha}_{j,p})^*\right) =
\sum_{p=1}^{n_{\alpha}} \tr  \left( \Phi(u^{\alpha}_{i,p}) \Phi(x) \Phi((u^{\alpha}_{j,p})^*) \right)  \\&=
\sum_{p=1}^{n_{\alpha}} \tr  \left( \Phi(x) \Phi((u^{\alpha}_{j,p})^*)  \Phi(u^{\alpha}_{i,p})\right)  =
\sum_{p=1}^{n_{\alpha}} \tr  \left( \Phi(x) \pi((u^{\alpha}_{j,p})^*) \pi(u^{\alpha}_{i,p})\right) \\&=
 \tr  \left( \Phi(x) \sum_{p=1}^{n_{\alpha}} \pi((u^{\alpha}_{j,p})^*u^{\alpha}_{i,p})\right).
 \end{align*} 	
Using now Corollary \ref{twistformula} we get  immediately	
\[  m(L_\phi(x)) = \tr  \left( \Phi(x) \delta_{ij}1_{M_n}\right) = m(x) \delta_{ij} = m(x) \phi(1_{\Linf}).  \] 	
 \end{proof}
  
  
We are ready to present some basic facts concerning the just infiniteness of group $\C^*$-algebras of discrete quantum groups, essentially following Section 6 of \cite{GrigorchukMusatRordam} and Section 2 of \cite{AlexeevKyed}.
 
 \begin{prop}
 	If $\C^*_r(\GGamma)$ is just infinite then either $\GGamma$ is $\C^*$-simple (i.e.\ $\C^*_r(\GGamma)$ is simple), or $\GGamma$ is amenable.
 \end{prop}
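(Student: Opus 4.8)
The plan is to argue by a simple dichotomy, taking Proposition~\ref{amenabilityfdreps} as the main input. First I would split into cases according to whether $\C^*_r(\GGamma)$ is simple. If it is, then $\GGamma$ is by definition $\C^*$-simple and there is nothing left to prove. Otherwise $\C^*_r(\GGamma)$ carries a closed two-sided ideal $I$ with $\{0\}\neq I\neq \C^*_r(\GGamma)$, and I would focus on the quotient $A:=\C^*_r(\GGamma)/I$.

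In this second case, since $I$ is a \emph{proper} ideal the quotient $A$ is nonzero, and since $I$ is nonzero and $\C^*_r(\GGamma)$ is just infinite, $A$ is finite-dimensional. By the structure theory of finite-dimensional $\C^*$-algebras, $A\cong\bigoplus_{j=1}^{m}M_{k_j}$ with $m\geq 1$. Composing the canonical quotient map $\C^*_r(\GGamma)\to A$ with the (unital) coordinate projection onto, say, the first matrix block $M_{k_1}$ yields a finite-dimensional (unital, $^*$-homomorphic) representation $\pi\colon\C^*_r(\GGamma)\to M_{k_1}$. Applying Proposition~\ref{amenabilityfdreps} to $\pi$ then forces $\GGamma$ to be amenable, which completes the dichotomy.

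I do not expect any genuine obstacle here, since the substance of the argument is already contained in Proposition~\ref{amenabilityfdreps}. The only points needing (minor) care are bookkeeping: that a proper ideal indeed produces a \emph{nonzero} quotient, so that the finite-dimensional $\C^*$-algebra $A$ has at least one matrix summand and hence an honest unital representation onto a matrix block; and that this representation, precomposed with the quotient map, is a bona fide finite-dimensional representation of the full algebra $\C^*_r(\GGamma)$ in exactly the sense required to invoke the previous proposition.
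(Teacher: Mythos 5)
Your argument is correct and is precisely the one the paper intends: the paper's own proof simply says ``Immediate consequence of Proposition \ref{amenabilityfdreps}'', and your dichotomy (simple, or else a proper nonzero ideal whose finite-dimensional quotient surjects onto a matrix block, yielding a finite-dimensional representation and hence amenability) is the expected unpacking of that one-liner.
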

 \begin{proof}
 	Immediate consequence of Proposition \ref{amenabilityfdreps}.
 \end{proof}

 We call a $^*$-algebra $\cA$ $^*$-just infinite if any representation of $\cA$ on a Hilbert space is either injective or has a finite-dimensional image.

 \begin{prop} \label{universal}
 	Let $\cA$ be an infinite-dimensional unital $^*$-algebra admitting a maximal $\C^*$-norm; denote by $A_u$ the corresponding universal $\C^*$-completion. Then $A_u$ is just infinite if and only if $\cA$ is $^*$-just infinite and admits a unique $\C^*$-completion. Furthermore $\cA$ admits a unique $\C^*$-completion if and only if every non-trivial (closed, two-sided) ideal of $A_u$ has a non-trivial intersection with $\cA$.
 \end{prop}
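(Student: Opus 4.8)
The plan is to reverse the order of the two assertions: I would first prove the ideal-theoretic criterion for uniqueness of the $\C^*$-completion, and then deduce the characterisation of just infiniteness from it. Two elementary observations will be used throughout. Since $\cA$ carries a maximal $\C^*$-norm $\|\cdot\|_u$, this is in particular a norm, so $\cA$ sits as a dense unital $^*$-subalgebra of $A_u$; moreover \emph{every} $^*$-representation $\pi\colon\cA\to B(\mathcal{H})$ is automatically $\|\cdot\|_u$-bounded (this is exactly what maximality of $\|\cdot\|_u$ means), hence extends uniquely to a unital $^*$-homomorphism $\bar\pi\colon A_u\to B(\mathcal{H})$ whose image is the norm closure of $\pi(\cA)$. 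Secondly, I will use freely that the quotient of a $\C^*$-algebra by a closed two-sided ideal is again a $\C^*$-algebra, that a $^*$-homomorphism of $\C^*$-algebras has closed range, and that a finite-dimensional $\C^*$-algebra coincides with its closure inside any ambient $\C^*$-algebra.

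For the second assertion I would argue both directions in terms of ideals. Every pre-$\C^*$-norm on $\cA$ is majorized by $\|\cdot\|_u$, so its completion $A$ receives the canonical surjection $q\colon A_u\to A$ restricting to the identity on $\cA$; then $I=\ker q$ is a closed two-sided ideal of $A_u$ with $I\cap\cA=\{0\}$, and $I=\{0\}$ would force $q$ to be an isomorphism, i.e.\ the norm to be $\|\cdot\|_u$. Conversely, any closed two-sided ideal $I$ of $A_u$ with $I\cap\cA=\{0\}$ yields, by restricting the quotient norm of $A_u/I$ to the injectively embedded copy of $\cA$, a pre-$\C^*$-norm on $\cA$ with completion $A_u/I$; were this norm equal to $\|\cdot\|_u$, the quotient map $A_u\to A_u/I$ would be isometric on the dense subset $\cA$, hence injective, hence $I=\{0\}$. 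Combining these two observations, $\cA$ admits a $\C^*$-completion other than $A_u$ exactly when $A_u$ possesses a non-trivial closed two-sided ideal meeting $\cA$ trivially; negating both sides yields the stated equivalence.

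For the first assertion, note that $\cA$ being infinite-dimensional and dense in $A_u$ makes $A_u$ infinite-dimensional, so ``$A_u$ is just infinite'' amounts to ``every proper quotient of $A_u$ is finite-dimensional''. For the backward implication, suppose $\cA$ is $^*$-just infinite and admits a unique $\C^*$-completion, and let $I$ be a non-trivial closed two-sided ideal of $A_u$; by the second assertion $I\cap\cA\neq\{0\}$, so restricting the quotient map $A_u\to A_u/I$ to $\cA$ gives a non-injective representation of $\cA$ (composed with a faithful representation of $A_u/I$), whose image is therefore finite-dimensional and, being dense and closed, equals all of $A_u/I$. For the forward implication, suppose $A_u$ is just infinite; if some non-trivial closed ideal $I$ satisfied $I\cap\cA=\{0\}$, then $\cA$ would embed densely into the finite-dimensional $\C^*$-algebra $A_u/I$ and hence be finite-dimensional, a contradiction, so by the second assertion $\cA$ admits a unique $\C^*$-completion. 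It remains to check that $\cA$ is $^*$-just infinite: given a representation $\pi\colon\cA\to B(\mathcal{H})$ with $\ker\pi\neq\{0\}$, extend it to $\bar\pi\colon A_u\to B(\mathcal{H})$; unitality of $\pi$ makes $\ker\bar\pi$ a proper ideal, while $\ker\bar\pi\cap\cA=\ker\pi\neq\{0\}$ makes it non-trivial, whence $A_u/\ker\bar\pi\cong\overline{\pi(\cA)}$ is finite-dimensional and so, a fortiori, is $\pi(\cA)$.

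The proof is entirely structural; there are no computations to perform. The one place where the hypotheses genuinely enter --- and hence the only step requiring real care --- is the extension of representations: maximality of the $\C^*$-norm is precisely what lets an arbitrary Hilbert-space representation of $\cA$ be pushed up to $A_u$, and unitality of representations is what keeps the associated kernels proper ideals. Beyond this, the argument is bookkeeping about which ideals are zero, proper, or all of $A_u$, and I anticipate no serious obstacle.
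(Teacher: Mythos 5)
Your proposal is correct and rests on the same ingredients as the paper's argument: maximality of the norm is used to extend arbitrary representations of $\cA$ to $A_u$, density of $\cA$ in $A_u$ turns finite-dimensional images into full quotients, and the dictionary between pre-$\C^*$-norms and closed ideals meeting $\cA$ trivially does the rest. The only difference is organizational: you prove the ideal-theoretic criterion in full first (the paper delegates it to Lemma 2.2 of \cite{AlexeevKyed}) and then derive the just-infiniteness equivalence from it, whereas the paper argues that equivalence directly in terms of representations; both routes are sound.
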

 \begin{proof}
 	The first part follows as in  Proposition 6.3 of \cite{GrigorchukMusatRordam}; we present the proof below for completeness.
 	
 	Assume first that $A_u$ is just infinite. Consider a representation $\pi:\cA\to B(H)$ and its extension to $A_u$. The latter is either injective or has finite-dimensional image. But this means that the original $\pi$ was either injective or had a finite-dimensional image. Hence $\cA$ is $^*$-just infinite. Then note that for  any pre-$\C^*$-norm  on $\cA$ we have  a representation $\pi:
 	A_u \to B(H)$, injective on $\cA$, such that the norm in question equals $\|\cdot\|_\pi$. As the image of $\pi$ is infinite-dimensional, $\pi$ must be injective, so that $\|\cdot\|_\pi = \|\cdot\|_{A_u}$.
 	
 	Suppose then that $\cA$ is $^*$-just infinite and admits a unique $\C^*$-completion. Consider a non-injective representation $\pi: A_u \to B(H)$. Then it is either injective on $\cA$, in which case it would give a different $\C^*$-completion of $\cA$, or it is non-injective on $\cA$, in which case $\pi(\cA)$ is finite-dimensional. But then $\pi(A_u)$ is finite-dimensional and we showed that $A_u$ is just infinite.

 	The last statement follows very easily (see Lemma 2.2 in \cite{AlexeevKyed}).
 \end{proof}

 The first part of the last result implies immediately the following corollary.
 
 \begin{cor} \label{corfulljustinfinite}
 	Let $\GGamma$ be infinite. Then $\C^*(\GGamma)$ is just infinite if and only if $\mathbb{C}[\GGamma]$ is $^*$-just infinite and $\C^*$-unique. Thus if $\C^*(\GGamma)$ is just infinite then $\GGamma$ is amenable.
 \end{cor}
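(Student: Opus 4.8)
The plan is to deduce everything directly from the first part of Proposition \ref{universal}, applied to the $^*$-algebra $\cA=\bc[\GGamma]$. First I would verify the hypotheses of that proposition. Since $\GGamma$ is infinite, the set $\Irr_{\bG}$ is infinite, so $\bc[\GGamma]=\Pol(\bG)$ is infinite-dimensional. Moreover $\bc[\GGamma]$ admits a maximal $\C^*$-norm, with associated universal completion $A_u=\C^*(\GGamma)$, exactly as recalled in Section \ref{Sect=QG}; concretely, each matrix coefficient $u^{\alpha}_{i,j}$ has norm at most $1$ in any representation, so the supremum of all $\C^*$-seminorms is finite on every element of $\bc[\GGamma]$ and defines the maximal $\C^*$-norm.

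With these hypotheses verified, Proposition \ref{universal} gives at once: $\C^*(\GGamma)$ is just infinite if and only if $\bc[\GGamma]$ is $^*$-just infinite and admits a unique $\C^*$-completion. One then only has to observe that "$\bc[\GGamma]$ admits a unique $\C^*$-completion" is precisely the assertion that $\bc[\GGamma]$ carries a unique pre-$\C^*$-norm, that is, that $\GGamma$ is $\C^*$-unique in the sense of our definition. This establishes the first equivalence.

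For the final assertion, suppose $\C^*(\GGamma)$ is just infinite. By the equivalence just proved, $\GGamma$ is $\C^*$-unique, so the two pre-$\C^*$-norms on $\bc[\GGamma]$ induced from $\C^*(\GGamma)$ and from $\C^*_r(\GGamma)$ must coincide. Consequently the canonical surjection $\Lambda\colon\C^*(\GGamma)\to\C^*_r(\GGamma)$ is isometric on the dense subalgebra $\bc[\GGamma]$, hence an isomorphism, and by the equivalence (i)$\Leftrightarrow$(ii) of Theorem \ref{amenableknown} we conclude that $\GGamma$ is amenable. (Alternatively, if $\ker\Lambda\neq 0$ then $\C^*_r(\GGamma)$ would be a proper, hence finite-dimensional, quotient of $\C^*(\GGamma)$, contradicting the fact that it contains the infinite-dimensional algebra $\bc[\GGamma]$.)

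I do not expect a genuine obstacle here: the mathematical content is entirely absorbed by Proposition \ref{universal} and Theorem \ref{amenableknown}. The only points that warrant a line of care are the identification of "unique $\C^*$-completion" with "unique pre-$\C^*$-norm", and the remark that infiniteness of $\GGamma$ forces $\bc[\GGamma]$ to be infinite-dimensional, which is exactly what makes Proposition \ref{universal} applicable.
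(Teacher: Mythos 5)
Your proposal is correct and matches the paper's approach: the paper derives the corollary as an immediate consequence of the first part of Proposition \ref{universal} applied to $\cA=\bc[\GGamma]$ with $A_u=\C^*(\GGamma)$, and obtains amenability from $\C^*$-uniqueness via Theorem \ref{amenableknown}. Your verification of the hypotheses (infinite-dimensionality of $\bc[\GGamma]$ and existence of the maximal $\C^*$-norm from unitarity of the matrices $U^\alpha$) just makes explicit what the paper leaves implicit.
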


The group $\bz$ is not $\C^*$-unique; in fact, as noted in \cite{GrigorchukMusatRordam} $\bc[\bz]$ does not admit a minimal pre-$\C^*$-norm. This fact (or rather reasons behind it) can be used to prove the following statement (for classical groups shown in Proposition 2.4 of \cite{AlexeevKyed}).

\begin{prop}
 Suppose that $\GGamma$ is a discrete quantum group, containing $\bz$ in its centre (in other words, $\bc[\bz]$ is a Hopf $^*$-subalgebra of the centre of $\bc[\GGamma])$. Then $\GGamma$ is not $\C^*_r$-unique.
\end{prop}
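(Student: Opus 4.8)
The plan is to produce a non-zero closed two-sided ideal $J$ of $\C^*_r(\GGamma)$ with $J\cap\bc[\GGamma]=\{0\}$. Granting this, the composite $\bc[\GGamma]\hookrightarrow\C^*_r(\GGamma)\twoheadrightarrow\C^*_r(\GGamma)/J$ is injective, so it defines a pre-$\C^\ast$-norm on $\bc[\GGamma]$; this norm is dominated by $\|\cdot\|_{\C^*_r(\GGamma)}$, being a quotient norm, and it is not equal to it, since otherwise the quotient map would be isometric on the dense subalgebra $\bc[\GGamma]$ and hence everywhere, forcing $J=\{0\}$. Thus $\GGamma$ would not be $\C^\ast_r$-unique.

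To build $J$ I would exploit the central circle. Write $u$ for the canonical unitary generator of $\bc[\bz]\subseteq Z(\bc[\GGamma])$. Since $\bc[\bz]$ is the group ring of $\bz$, the $u^k$ ($k\in\bz$) are linearly independent group-likes, so for $k\neq 0$ the element $u^k$ is a matrix coefficient of a non-trivial one-dimensional representation of $\bG=\widehat{\GGamma}$, whence the Haar state satisfies $h(u^k)=\delta_{k,0}$. Consequently $h$ restricted to $\C^\ast(u)$ is normalised Lebesgue measure, and the vectors $u^k\Omega\in\ell^2(\GGamma)$ are orthonormal, so $u$ restricted to the reducing subspace they span is a bilateral shift; by spectral permanence $\Spec_{\C^*_r(\GGamma)}(u)=\bt$ and $\C^\ast(u)\cong C(\bt)$. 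Hence $\C^*_r(\GGamma)$ is a $C(\bt)$-algebra with $\C^\ast(u)$ central, with fibre maps $q_\lambda\colon\C^*_r(\GGamma)\to A_\lambda:=\C^*_r(\GGamma)/\overline{\C^*_r(\GGamma)(u-\lambda)}$. I would then fix a proper closed arc $X\subsetneq\bt$ and let $J$ be the closed ideal generated by $\{g(u):g\in C(\bt),\ g|_X=0\}$. By standard facts about $C(\bt)$-algebras $J\neq\{0\}$ (as $X\neq\bt$) and $J=\{a\in\C^*_r(\GGamma):q_\lambda(a)=0\ \text{for all}\ \lambda\in X\}$; note also that already on $\bc[\bz]$ the quotient norm is $\sup_X|\cdot|<\sup_\bt|\cdot|=\|\cdot\|_{\C^*_r(\GGamma)}$.

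The heart of the argument is $J\cap\bc[\GGamma]=\{0\}$. Let $a\in J\cap\bc[\GGamma]$; it suffices to show $h(a^\ast a)=0$, for then $a=0$ by faithfulness of $h$ on $\C^*_r(\GGamma)$. Consider the positive Borel measure $\nu$ on $\bt$ with $\int_\bt f\,d\nu=h\big(f(u)\,a^\ast a\big)$ for $f\in C(\bt)$ (positivity because $f(u)$ is central). Its $k$-th Fourier coefficient is $h(u^{-k}a^\ast a)$, and expanding $a^\ast a$ into a finite sum of matrix coefficients one sees that $h(u^{-k}\,\cdot\,)$ annihilates the matrix coefficients of every irreducible representation except the one-dimensional one spanned by $u^k$, so $h(u^{-k}a^\ast a)$ equals the coefficient of $u^k$ in $a^\ast a$ and is non-zero for only finitely many $k$. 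Therefore $\nu$ is absolutely continuous with density a non-negative trigonometric polynomial $P$. On the other hand $\nu(X)=h\big(1_X(u)\,a^\ast a\big)=h\big((1_X(u)a)^\ast(1_X(u)a)\big)$, since the spectral projection $1_X(u)$ is a central projection of $\C^*_r(\GGamma)''$ and $h$ is normal there; and $1_X(u)a=0$ because in the direct-integral decomposition of $\C^*_r(\GGamma)''$ over $\C^\ast(u)$ the fibre of $a$ at $\lambda\in X$ is $q_\lambda(a)=0$. Hence $\nu(X)=0$, so, $P$ being continuous and non-negative and $X$ having positive Lebesgue measure, $P$ vanishes identically on $X$; but a non-zero trigonometric polynomial has only finitely many zeros on $\bt$, so $P\equiv 0$ and $h(a^\ast a)=\int_\bt P\,d\lambda=0$.

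The main obstacle is exactly this last step: one must marry the \emph{algebraic} rigidity coming from $a\in\bc[\GGamma]$ (which forces the spectral density of $a^\ast a$ in the central direction to be a trigonometric polynomial) with the \emph{analytic} information $a\in J$ (which forces that density to vanish on a set of positive measure), and the bridge is the disintegration of $\C^*_r(\GGamma)$ over its central subalgebra $\C^\ast(u)$. The remaining ingredients — the computation of $\Spec(u)$ and of $h|_{\C^\ast(u)}$, the standard $C(\bt)$-algebra description of $J$, and the opening reduction — are routine; only a little extra care with the direct-integral language is needed should $\Irr_{\bG}$ be uncountable, a situation that does not arise in the cases of interest.
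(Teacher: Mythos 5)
Your proof is correct and follows essentially the same route as the paper (which in turn follows Proposition 2.4 of Alexeev--Kyed): identify the central copy of $C(\bt)$ via the unitary $u$, note that the Haar state restricts to Lebesgue measure, cut down along a proper closed subset $X\subset\bt$ of positive Lebesgue measure, and use Woronowicz--Peter--Weyl orthogonality to show that any $a\in\bc[\GGamma]$ annihilated by the cut-down has $h(a^*a)=0$, hence vanishes. The only cosmetic differences are that the paper compresses by the central projection $1_X(u)$ and phrases the orthogonality step via the $h$-preserving conditional expectation onto $\textup{VN}(\bz)$, whereas you pass to the quotient by the corresponding ideal $J$ and compute Fourier coefficients of the measure $f\mapsto h(f(u)a^*a)$ directly --- these are the same argument with identical kernels.
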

\begin{proof}
Consider the Haar state $h$ on $\bc[\GGamma]$. By the uniqueness its restriction to $\bc[\bz] = \Pol(\bt)$ is given by the integration with respect to the Lebesgue measure on the circle, and we have the inclusions $\C^*_r(\bz)\subset Z( \C^*_r(\GGamma))$ and $\textup{VN}(\bz)\subset  Z(\textup{VN}(\GGamma))$. Arguing as in Proposition 2.4 of \cite{AlexeevKyed} (see also the following section) we obtain a projection $p\in \textup{VN}(\bz)$ such that the restricted representation $\pi: \C^*_r(\GGamma) \to B(p\ell^2(\GGamma))$ is faithful on $\Pol(\bt)$ and is not faithful on $\C^*_r(\bz)$ (so also not on $\C^*_r(\GGamma)$). It remains to note that it is faithful on $\bc[\GGamma]$. This is however easy: consider the $h$-preserving conditional expectation $\mathbb{E}: \textup{VN}(\GGamma) \to \textup{VN}(\bz)$, whose existence follows from the Takesaki's theorem \cite{TakII} and the fact that the latter algebra is obviously contained in the centralizer of $h$. As $\bc[\bz] \subset \bc[\GGamma]$ can be viewed as spanned by certain (one-dimensional) irreducible representations of $\widehat{\GGamma}$, Woronowicz-Peter-Weyl formulae show that  $\mathbb{E}(\bc[\GGamma]) = \bc[\bz]$. Then one can conclude the proof of faithfulness of $\pi$ on $\bc[\GGamma]$ as in Proposition 2.4 of \cite{AlexeevKyed}.

The result now follows from the second statement in Proposition \ref{universal}, if we  consider the kernel of the representation $\pi$.
	
\end{proof}

 In fact the proof above shows that $\GGamma$ is even not $\C^*_r$-unique.  Theorem \ref{amenableknown} shows that for amenable discrete quantum groups $\C^*_r$-uniqueness is the same as $\C^*$-uniqueness, so we will focus on the latter concept.



 \begin{dfn}\label{localfin}
 	We call a discrete quantum group $\GGamma$ locally finite if  each finite subset $I\subset \subset \Irr(\bGGamma)$ generates a finite fusion ring inside $\Irr(\bGGamma)$.	
 \end{dfn}

 \begin{lem}
 	Consider a   discrete quantum group $\GGamma$. The following are equivalent:
 	\begin{enumerate}
 		\item[(i)] $\GGamma$ is locally finite;  
 		\item[(ii)] every finite subset of $\mathbb{C}[\GGamma]$ is contained in a finite-dimensional sub Hopf*-algebra of $\mathbb{C}[\GGamma]$;
 		\item[(iii)] every finite subset of $\mathbb{C}[\GGamma]$ is contained in a finite-dimensional unital *-subalgebra of $\mathbb{C}[\GGamma]$.
 	\end{enumerate}
 \end{lem}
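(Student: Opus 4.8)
The plan is to prove the cycle of implications (i)$\Rightarrow$(ii)$\Rightarrow$(iii)$\Rightarrow$(i), working throughout with the Woronowicz--Peter--Weyl decomposition
\[\mathbb{C}[\GGamma]=\Pol(\bG)=\bigoplus_{\gamma\in\Irr_{\bG}}C(U^{\gamma}),\qquad C(U^{\gamma}):=\spa\{u^{\gamma}_{i,j}:1\le i,j\le n_{\gamma}\}.\]
First I would record two structural facts. Since $\Pol(\bG)$ is cosemisimple, every finite subset $F\subset\mathbb{C}[\GGamma]$ is contained in $\bigoplus_{\gamma\in\supp(F)}C(U^{\gamma})$ for a (unique smallest) finite set $\supp(F)\subset\Irr_{\bG}$. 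Secondly, for $\alpha,\beta\in\Irr_{\bG}$ one has
\[C(U^{\alpha})\cdot C(U^{\beta})=\bigoplus_{\gamma\in\alpha\otimes\beta}C(U^{\gamma}),\qquad C(U^{\alpha})^{*}=S(C(U^{\alpha}))=C(U^{\bar{\alpha}}),\qquad \Delta(C(U^{\gamma}))\subseteq C(U^{\gamma})\otimes C(U^{\gamma}),\]
where $\alpha\otimes\beta\subset\Irr_{\bG}$ denotes the fusion product and $\bar{\alpha}$ the conjugate; these follow from the observation that the matrix coefficients of a finite-dimensional representation span the same subspace as those of any similar representation, applied to $U^{\alpha}\otimes U^{\beta}\cong\bigoplus_{\gamma}(U^{\gamma})^{\oplus m^{\gamma}_{\alpha\beta}}$ and to $\overline{U^{\alpha}}\cong U^{\bar{\alpha}}$, together with $S(u^{\gamma}_{i,j})=(u^{\gamma}_{j,i})^{*}$ and $\Delta(u^{\gamma}_{i,j})=\sum_{k}u^{\gamma}_{i,k}\otimes u^{\gamma}_{k,j}$. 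In particular $C(U^{\gamma})\subseteq C(U^{\alpha})\cdot C(U^{\beta})$ whenever $\gamma\in\alpha\otimes\beta$.

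For (i)$\Rightarrow$(ii), given a finite $F\subset\mathbb{C}[\GGamma]$ I would put $I:=\supp(F)$ and let $\langle I\rangle$ be the fusion subring generated by $I$ (the smallest subset of $\Irr_{\bG}$ containing $I$ and the trivial class and closed under the fusion product and conjugation), which is finite by (i). Then $B:=\bigoplus_{\gamma\in\langle I\rangle}C(U^{\gamma})$ is finite-dimensional, contains $F$, and by the displayed identities and the closure properties of $\langle I\rangle$ it is a unital $*$-subalgebra stable under $\Delta$, $\ep$ and $S$, hence a finite-dimensional sub-Hopf-$*$-algebra. The implication (ii)$\Rightarrow$(iii) is immediate, a sub-Hopf-$*$-algebra being in particular a finite-dimensional unital $*$-subalgebra.

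The heart of the matter is (iii)$\Rightarrow$(i). I would fix a finite $I\subset\subset\Irr_{\bG}$ and apply (iii) to the finite set $\{u^{\alpha}_{i,j}:\alpha\in I,\ 1\le i,j\le n_{\alpha}\}$, obtaining a finite-dimensional unital $*$-subalgebra $B$ with $C(U^{\alpha})\subseteq B$ for all $\alpha\in I$. Consider $\mathcal{J}:=\{\gamma\in\Irr_{\bG}:C(U^{\gamma})\subseteq B\}$; since the spaces $C(U^{\gamma})$ are linearly independent and nonzero, $\mathcal{J}$ is finite, of cardinality at most $\dim B$. One checks that $\mathcal{J}$ is a fusion subring containing $I$: it contains the trivial class because $1\in B$ and $C(U^{\mathrm{triv}})=\mathbb{C}1$, it is conjugation-closed because $B$ is $*$-closed and $C(U^{\bar\gamma})=C(U^{\gamma})^{*}$, and it is closed under the fusion product because $\gamma\in\alpha\otimes\beta$ with $\alpha,\beta\in\mathcal{J}$ forces $C(U^{\gamma})\subseteq C(U^{\alpha})\cdot C(U^{\beta})\subseteq B\cdot B\subseteq B$. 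Hence the fusion subring generated by $I$ is contained in $\mathcal{J}$, so is finite; as $I$ was arbitrary, $\GGamma$ is locally finite.

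The single point requiring care --- and the one I would verify most carefully against the Woronowicz--Peter--Weyl formalism --- is the identity $C(U^{\alpha})\cdot C(U^{\beta})=\bigoplus_{\gamma\in\alpha\otimes\beta}C(U^{\gamma})$: it is exactly this that forces entire matrix-coefficient blocks, rather than merely individual coefficients, into the subalgebra $B$ in the last step. Everything else is routine bookkeeping with the Peter--Weyl decomposition of $\Pol(\bG)$.
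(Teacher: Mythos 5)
Your proof is correct, but it closes the cycle of implications along a different edge than the paper does. The paper dismisses (i)$\Leftrightarrow$(ii) as a ``straightforward consequence of Woronowicz--Peter--Weyl theory'' (essentially your (i)$\Rightarrow$(ii) argument with the fusion subring $\langle I\rangle$), notes (ii)$\Rightarrow$(iii) is trivial, and then proves (iii)$\Rightarrow$(ii) by an appeal to the fundamental theorem on coalgebras: a finite set $F$ sits inside a finite-dimensional subcoalgebra $C$ (taken unital and self-adjoint), the algebra generated by $C$ is finite-dimensional by (iii) applied to a linear basis of $C$, and one checks it is a Hopf $^*$-subalgebra. You instead prove (iii)$\Rightarrow$(i) directly, via the identity $C(U^{\alpha})\cdot C(U^{\beta})=\bigoplus_{\gamma\in\alpha\otimes\beta}C(U^{\gamma})$ and the observation that $\mathcal{J}=\{\gamma : C(U^{\gamma})\subseteq B\}$ is a finite fusion subring containing $I$. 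You correctly identify the one point needing care --- the equality (not just inclusion) of the product of coefficient spaces with the sum over constituents --- and it does hold, since the coefficient space of a completely reducible representation is the sum of the coefficient spaces of its irreducible constituents and is invariant under equivalence. What each approach buys: the paper's route is shorter and does not need the product formula for coefficient spaces, at the price of invoking the fundamental theorem on coalgebras and leaving the Hopf-subalgebra verification to the reader; yours stays entirely inside the Peter--Weyl formalism, makes the passage from subalgebras to fusion subrings explicit, and gives the quantitative bound $|\mathcal{J}|\le\dim B$ for free.
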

 \begin{proof}
 	The equivalence of (i) and (ii) is a straightforward consequence of the Woronowicz-Peter-Weyl theory. The implication (ii)$\Longrightarrow$(iii) is trivial. Assume then that (iii) holds. Consider a finite set $F\subset \mathbb{C}[\GGamma]$. By the fundamental theorem on coalgebras there is a finite-dimensional sub-coalgebra (which we may assume to be unital and self-adjoint) $C\subset \mathbb{C}[\GGamma]$ containing $F$. Consider then the algebra generated by $C$. By (iii) it is finite-dimensional (we can choose a finite linear basis in $C$); it is easy to check that it is in fact a $^*$-Hopf subalgebra of $\mathbb{C}[\GGamma]$. 
 \end{proof}
 
Locally finite discrete quantum groups are $\C^*$-unique for essentially trivial reasons.

 \begin{prop}
 	If a discrete quantum group $\GGamma$ is locally finite, then $\mathbb{C}[\GGamma]$ is $\C^*$-unique.
 \end{prop}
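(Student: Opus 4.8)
The plan is to reduce everything to the elementary fact that a finite-dimensional $^*$-algebra admits at most one $\C^*$-norm, feeding in local finiteness through part (iii) of the preceding Lemma, which guarantees that every element of $\mathbb{C}[\GGamma]$ lies in some finite-dimensional unital $^*$-subalgebra.

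First I would fix two pre-$\C^*$-norms $\|\cdot\|_1$ and $\|\cdot\|_2$ on $\mathbb{C}[\GGamma]$, with completions $A_1$ and $A_2$, together with an arbitrary element $a \in \mathbb{C}[\GGamma]$. Using local finiteness I would choose a finite-dimensional unital $^*$-subalgebra $B \subseteq \mathbb{C}[\GGamma]$ containing $a$. Since $B$ is finite-dimensional it is a closed subspace of each $A_k$, hence a finite-dimensional $\C^*$-subalgebra of $A_k$ for $k=1,2$; in particular $\|\cdot\|_k$ restricts to a genuine $\C^*$-norm on $B$.

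Next I would invoke the uniqueness of the $\C^*$-norm on a finite-dimensional $\C^*$-algebra: on such an algebra the norm of a self-adjoint element equals the largest modulus of a root of its (purely algebraically defined) minimal polynomial, and for a general element $b$ one recovers $\|b\| = \|b^*b\|^{1/2}$, so the norm is completely determined by the underlying $^*$-algebra structure. Hence $\|\cdot\|_1$ and $\|\cdot\|_2$ agree on $B$, and in particular $\|a\|_1 = \|a\|_2$. As $a$ was arbitrary, the two norms coincide, so $\mathbb{C}[\GGamma]$ is $\C^*$-unique.

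There is no real obstacle here — this is the ``essentially trivial'' argument promised in the text. The only point requiring a little care is that restricting a pre-$\C^*$-norm to a $^*$-subalgebra again yields a $\C^*$-norm, which is immediate once one observes that the finite-dimensional subspace $B$ is automatically norm-closed, hence complete, and therefore a $\C^*$-algebra as a closed $^*$-subalgebra of $A_k$.
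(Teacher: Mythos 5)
Your proof is correct and follows essentially the same route as the paper, which simply invokes Proposition 6.7 of \cite{GrigorchukMusatRordam} together with part (iii) of the preceding lemma; that cited argument is exactly your reduction to the uniqueness of the $\C^*$-norm on a finite-dimensional $^*$-algebra. No changes are needed.
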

 \begin{proof}
 	The easy proof follows exactly as in Proposition 6.7 of \cite{GrigorchukMusatRordam}, using part (iii) of the equivalence in the lemma above. 
 \end{proof}

 In Section 4 we will exhibit an example showing that the converse implication does not hold.


 \section{Non-unique completions for $q$-deformations}\label{Sect=qDef}
 In this section we prove that the quantum groups that arise as $q$-deformations of simply connected semisimple compact Lie groups never have a unique $\C^\ast$-closure of their polynomial algebra. We prove this explicitly for $SU_q(2)$ and then treat the general case. Recall that duals of all such $q$-deformations are amenable, as was first shown in \cite{Banica} and then reproved via methods related to Theorem \ref{amenableknown} in the Appendix of \cite{FST}. In particular they are $\C^*$-unique if and only if they are $\C^*_r$-unique.

  Let $\bG_q = SU_q(2), q \in (-1, 1) \backslash \{ 0 \}$. Algebraically $\Pol(\bG_q)$ is  defined as the $\ast$-algebra generated by operators $\alpha$ and $\gamma$ satisfying the relations,
 \begin{equation}\label{Eqn=RelSU2}
 \gamma^\ast \gamma = \gamma \gamma^\ast, \qquad q \gamma^\ast \alpha  = \alpha \gamma^\ast,   \qquad  q \gamma \alpha = \alpha \gamma, \qquad  \alpha^\ast \alpha +  \gamma^\ast \gamma = 1, \qquad \alpha \alpha^\ast +  q^2 \gamma \gamma^\ast = 1,
 \end{equation}
 with comultiplication extending 
 \begin{equation}\label{Eqn=SU2Coproduct}
 \Delta(\alpha) = \alpha \otimes \alpha - q \gamma^\ast \otimes \gamma \qquad {\rm and } \qquad  \Delta(\gamma) = \gamma \otimes \alpha + \alpha^\ast \otimes \gamma.
 \end{equation}  
As before, we write $\C(\bG_q)$ for the $\C^\ast$-closure under the GNS-representation of the Haar state (equivalently, the universal closure, as $\widehat{\bG_q}$ is amenable). It is isomorphic to the $\C^\ast$-algebra generated by the concrete operators acting on $L_2(\{0\} \cup  \bigcup_{k=0}^\infty q^{k}  \mathbb{T})$, with the underlying measure being a sum of the Dirac measure on $0$ and (normalised) Lebesgue measures on each rescaled circle $q^{k}  \mathbb{T}$:
 \[
 ( \widetilde{\alpha} f)(z) = \sqrt{1 - \vert z \vert^2} f( q^{-1} z),  \qquad    (\widetilde{\gamma}f)( z  ) =   z f(z), \qquad z \in \{0\} \cup  \bigcup_{k=0}^\infty q^{k}  \mathbb{T}. 
 \]
 Here we take the convention that $f(q^{-1} z) = 0, z \in \mathbb{T}$. 
 So $\widetilde{\alpha}$ and $\widetilde{\gamma}$ are the images of $\alpha$ and $\gamma$ under the isomorphism in question and the $\ast$-algebra generated by  $\widetilde{\alpha}$ and $\widetilde{\gamma}$ is isomorphic to $\Pol(\bG_q)$. 
 Write  $\mathbb{T}_{\uparrow} = \{ z \in \mathbb{T} \mid \Im(z) \geq 0 \}$ and $\mathbb{T}_{\downarrow} = \mathbb{T} \backslash \mathbb{T}_{\downarrow}$ for the upper half circle and (open) lower half circle respectively.

 
  \begin{thm}
 	The discrete quantum group $\widehat{\bG_q}$ is not $\C^\ast$-unique.  
 \end{thm}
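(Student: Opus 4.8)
The plan is to imitate, in the quantum setting, the classical argument showing that $\bz$ (equivalently $C(\mathbb{T})$) is not $\C^*$-unique: a nonzero trigonometric polynomial vanishes at only finitely many points, so the multiplication representation of $C(\mathbb{T})$ on $L_2$ of a proper closed arc kills some continuous function while remaining faithful on $\bc[\bz]$. Concretely, I would fix a closed arc $A\subsetneq\mathbb{T}$ of positive length $<2\pi$ (say $A=\mathbb{T}_\uparrow$) and, inside $\cHil:=L_2(\{0\}\cup\bigcup_{k\ge 0}q^k\mathbb{T})$, consider the closed subspace $\cHil_A:=L_2(\{0\}\cup\bigcup_{k\ge 0}q^kA)$, where $q^kA$ is $A$ rescaled into $q^k\mathbb{T}$. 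The first thing to check is that $\cHil_A$ is invariant under $\widetilde\alpha,\widetilde\alpha^*,\widetilde\gamma,\widetilde\gamma^*$: this is immediate since $\widetilde\gamma=M_z$ and $\widetilde\gamma^*=M_{\bar z}$ are multiplication operators (they do not enlarge supports), while $(\widetilde\alpha f)(z)=\sqrt{1-|z|^2}f(q^{-1}z)$ rescales the radial coordinate while keeping the angular one, hence maps $L_2(q^kA)$ into $L_2(q^{k+1}A)$ (and $0$ is fixed by $\widetilde\alpha,\widetilde\alpha^*$ and killed by $\widetilde\gamma,\widetilde\gamma^*$). Thus $\cHil_A$ is invariant under the $^*$-algebra generated by $\widetilde\alpha,\widetilde\gamma$, hence under its norm closure $\C(\bG_q)$, and compression to $\cHil_A$ yields a representation $\pi_A\colon\C(\bG_q)\to B(\cHil_A)$. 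As $\widehat{\bG_q}$ is amenable we have $\C(\bG_q)=\C^*(\widehat{\bG_q})$, so by the second part of Proposition \ref{universal} it suffices to show that $\ker\pi_A$ is a nonzero proper ideal which meets $\Pol(\bG_q)$ only in $0$.

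For non-faithfulness of $\pi_A$ on $\C(\bG_q)$: the operator $\widetilde\gamma^*\widetilde\gamma=M_{|z|^2}$ has spectrum $\{0\}\cup\{q^{2k}:k\ge 0\}$, in which $1$ is isolated; hence, by functional calculus, the spectral projection $P_0$ onto the outer circle $\{|z|=1\}$ lies in $\C(\bG_q)$. Then $v:=\widetilde\gamma P_0\in\C(\bG_q)$ acts as the unitary $M_z$ on $L_2(\mathbb{T})$ and as $0$ on the orthogonal complement, so $\Spec(v)=\mathbb{T}\cup\{0\}$; but $v$ commutes with the projection onto $\cHil_A$, so $\pi_A(v)$ acts as $M_z$ on $L_2(A)$ and as $0$ elsewhere, whence $\Spec(\pi_A(v))=A\cup\{0\}\subsetneq\mathbb{T}\cup\{0\}$. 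Since an injective $^*$-homomorphism is isometric and hence preserves spectra, $\pi_A$ is not faithful; and $\pi_A(1)\ne 0$, so $\ker\pi_A$ is proper.

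For faithfulness of $\pi_A$ on $\Pol(\bG_q)$ — the crux, and the analogue of "a nonzero trigonometric polynomial has finitely many zeros" — I would argue as follows. Take $0\ne x\in\Pol(\bG_q)$ and expand it in the usual basis $\{\widetilde\alpha^a\widetilde\gamma^b(\widetilde\gamma^*)^c:a\in\bz,\ b,c\ge 0\}$ (with $\widetilde\alpha^{-m}:=(\widetilde\alpha^*)^m$), which consists of linearly independent operators since the model is faithful on $\Pol(\bG_q)$; pick $(a_0,b_0,c_0)$ with nonzero coefficient. The structural point is that on the circle $q^k\mathbb{T}$ every generator acts as a scalar depending only on $k$ times a pure shift of the angular Fourier index: $\widetilde\gamma$ multiplies by $q^k$ and shifts $n\mapsto n+1$, $\widetilde\gamma^*$ multiplies by $q^k$ and shifts $n\mapsto n-1$, and $\widetilde\alpha,\widetilde\alpha^*$ pass between adjacent circles with a weight depending only on the level while preserving $n$. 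Hence, applying $x$ to a function $u$ supported on a single arc $q^{k_0}A$, the component of $xu$ on the circle $q^{k_0+a_0}\mathbb{T}$ equals $\mu_{a_0}(k_0)\,Q_{k_0}(w)\,u(w)$ on $A$, where $\mu_{a_0}(k_0)$ is a product of shift weights (nonzero once $k_0$ is large enough for the needed circles to exist) and $Q_{k_0}$ is a Laurent polynomial in the angular variable $w$ whose coefficients are polynomials in $q^{k_0}$ that are not all identically zero (because $\lambda_{a_0,b_0,c_0}\ne 0$). As the values $q^{k_0}$, $k_0\ge 0$, are infinitely many and distinct, for all but finitely many $k_0$ one has $\mu_{a_0}(k_0)\ne 0$ and $Q_{k_0}\not\equiv 0$; fixing such a $k_0$ and taking $u=\chi_A\in L_2(q^{k_0}A)\subseteq\cHil_A$, the function $w\mapsto\mu_{a_0}(k_0)Q_{k_0}(w)$ is nonzero a.e. on the positive-measure set $A$, so the $q^{k_0+a_0}\mathbb{T}$-component of $xu$ is nonzero; distinct circles lie in orthogonal subspaces, so no cancellation can occur and $xu\ne 0$. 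Thus $\pi_A(x)\ne 0$.

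Putting this together, $\ker\pi_A$ is a nonzero proper closed two-sided ideal of $\C(\bG_q)=\C^*(\widehat{\bG_q})$ meeting $\Pol(\bG_q)=\bc[\widehat{\bG_q}]$ only in $0$, so the second part of Proposition \ref{universal} gives a second $\C^*$-completion of $\bc[\widehat{\bG_q}]$, i.e.\ $\widehat{\bG_q}$ is not $\C^*$-unique. The only genuinely hard step is the faithfulness on $\Pol(\bG_q)$, and within it the main thing to get right is the "scalar times pure angular shift" description of the generators together with the verification that the Laurent-polynomial coefficients (polynomials in $q^{k_0}$) do not all vanish for infinitely many levels $k_0$; the remaining steps are essentially bookkeeping.
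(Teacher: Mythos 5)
Your proposal is correct and follows essentially the same route as the paper's proof: restrict the concrete model representation to $L_2$ of sub-arcs $q^kA$ of the circles, and check that this restriction is faithful on $\Pol(\bG_q)$ (via the linear basis $\alpha^a\gamma^b(\gamma^*)^c$ and the fact that a nonzero Laurent polynomial cannot vanish on an arc of positive measure) but not on $\C(\bG_q)$. The one caveat is that your justification that $\widetilde{\alpha}$ ``keeps the angular coordinate'' is literally false for $q<0$, where passing between circles reflects the arc; your multiplicative reading $q^kA=\{q^ka:a\in A\}$ silently repairs this, in exactly the way the paper's final paragraph modifies the construction for negative $q$.
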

 \begin{proof}
 	Let $A = \C^\ast\langle \widetilde{\gamma} \rangle$ be the (commutative) unital  C$^\ast$-subalgebra of $\C(\bG_q)$ generated by the normal element $\widetilde{\gamma}$. 
 	
 	Assume first that $q>0$. The spectrum of $\widetilde{\gamma}$ is  $\{0\} \cup  \bigcup_{k=0}^\infty q^{k}  \mathbb{T}$, so that we can identify $A$ with $C(\{0\} \cup  \bigcup_{k=0}^\infty q^{k}  \mathbb{T})$. Let then $f \in C(\{0\} \cup  \bigcup_{k=0}^\infty q^{k}  \mathbb{T})$ be a non-zero function supported on $\bigcup_{k=0}^\infty q^{k}  \mathbb{T}_{\uparrow}$, and denote by $x$ the corresponding element of $A$. Let $\pi: A \rightarrow B(L_2(\bigcup_{k=0}^\infty q^{k}  \mathbb{T}_{\downarrow}))$ be the representation of $A$ that is given simply by pointwise multiplication.

 	We define a representation $\rho$ of $\Pol(\bG_q)$ on $L_2(\bigcup_{k=0}^\infty q^{k}  \mathbb{T}_{\downarrow})$ as follows. Set 
 	\[
 	\rho(\gamma)(f)(z) = z f(z), \quad \rho(\alpha)(f)(z) = \sqrt{1 - \vert z \vert^2} f(q^{-1} z ), \qquad z \in \bigcup_{k=0}^\infty q^{k}  \mathbb{T}_{\downarrow}, \quad  f \in L_2(\bigcup_{k=0}^\infty q^{k}  \mathbb{T}_{\downarrow}).
 	\] 
 	So $\rho(\gamma)$ and $\rho(\alpha)$ are the restrictions of $\widetilde{\gamma}$ and $\widetilde{\alpha}$ to $L_2(\bigcup_{k=0}^\infty q^{k}  \mathbb{T}_{\downarrow})$. Therefore  these prescriptions for $\rho$ define a representation of $\Pol(\bG_q)$ on $L_2(\bigcup_{k=0}^\infty q^{k}  \mathbb{T}_{\downarrow})$. The representation $\rho$ extends to a representation of $\C(\bG_q)$  and we have $\rho(x) = 0$. 
 	
 	It remains to show that $\rho$ is injective on $\Pol(\bG_q)$. In order to do this, note that by the defining relations of $\bG_q$  a linear basis of $\Pol(\bG_q)$ is given by
 	\[
 	\alpha^k \gamma^m (\gamma^\ast)^n, \qquad k,m,n \in \N_0.
 	\]
 	Take a finite linear combination $y = \sum_{k,m,n} c_{k,m,n} \alpha^k \gamma^m (\gamma^\ast)^n$.  	Suppose that $\rho(y) = 0$. Then the explicit description of $\rho$ shows that for all $k$ we must have $\rho(\sum_{m,n} c_{k,m,n} \alpha^k \gamma^m (\gamma^\ast)^n) = 0$ which is equivalent to the property that for all $k$ we have that   $\rho(\sum_{m,n} c_{k,m,n}  \gamma^m (\gamma^\ast)^n) = 0$. As the functions $z \mapsto z^m \overline{z}^n$ are linearly independent in $C( \bigcup_{k=0}^\infty q^{k}  \mathbb{T}_{\downarrow} )$ we see that this can  happen only if for all $k,m$ and $n$ we have $c_{k,m,n} = 0$.  
 	
 	So $\rho$ is an injective representation of $\Pol(\bG_q)$ and the associated C$^\ast$-norm is properly majorized by the C$^\ast$-norm of $\C(\bG_q)$ as $\rho(x) = 0$ but $x \not = 0$. 
 	
 	As noted by the referee, the construction has to be modified for $q<0$. The modification is however straightforward: we simply replace the set
$ 	\bigcup_{k=0}^\infty q^{k}  \mathbb{T}_{\downarrow}$ by the set    $ \bigcup_{k=0}^\infty \left(q^{2k}  \mathbb{T}_{\downarrow} \cup q^{2k+1}  \mathbb{T}_{\uparrow}\right)$, to take into account the fact that in that case the action of $\alpha$ `flips' the upper/lower sides of the circle. Otherwise the whole argument remains the same.
 \end{proof}

 
Next we generalize this result. Let now $q\in (0,1)$, let $K$ be a simply connected semisimple compact Lie group and let $\bK_q$ denote the $q$-deformation  of $K$ in the sense of \cite{KS}, with  $\Pol(\bK_q)$ the corresponding polynomial algebra. In \cite[Theorem 6.2.7]{KS} the irreducible representations of $\Pol(\bK_q)$ are classified. 
 
 \begin{thm}[Theorem 6.2.7 of \cite{KS}]\label{Thm=MassiveDecomp}
 	We have the following:  	
 \begin{enumerate}	
 	\item \label{Item=One} There is group $W$ generated by a set $S$, called the Weyl group, and a maximal torus $\mathbb{T}^d$ in $\bK_q$ such that for every $w \in W$ with reduced expression $w = s_1 \cdots s_n, s_i \in S,$  and $\tau \in \mathbb{T}^d$ there is a representation 
 \[
\pi_{w,\tau}:= (\pi_{s_1} \otimes \cdots \otimes \pi_{s_n} \otimes \pi_\tau) \circ \Delta_{(n+1)}, 
 \] 
 of $\Pol(\bK_q)$ on the Hilbert space $H_w:=\ell^2(\N_0)^{\otimes n} \otimes \bc$. If $w$ has another reduced expression, then the corresponding representation is unitarily equivalent and we set $\pi_w =  (\pi_{s_1} \otimes \ldots \otimes \pi_{s_n}) \circ \Delta_{(n)}$.  Further all the representations indexed by $W$ and $\mathbb{T}^d$ form a complete set of  mutually inequivalent irreducible representations of $\Pol(\bK_q)$; and $\C(\bK_q)$ is a type $I$ $\C^*$-algebra. 
 \item \label{Item=Two} The representations $\pi_s, s \in S$ factor through the ($s$-dependent) projections $\Pol(\bK_q) \twoheadrightarrow \Pol(\textup{SU}_q(2))$ and representations $\pi_\tau, \tau \in \mathbb{T}^d$ factor through the canonical projection $\Pol(\bK_q) \twoheadrightarrow \Pol(\mathbb{T}^d)$. 

 \end{enumerate}
 \end{thm}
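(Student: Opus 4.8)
The statement is Soibelman's classification of the irreducible $^*$-representations of a quantised function algebra; the plan is to reconstruct it in three stages — build the representations $\pi_{w,\tau}$, show they are irreducible and mutually inequivalent, and then show there are no others — along the lines of the proof in \cite{KS}.

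\emph{Construction.} Recall that $\Pol(\bK_q)$ sits inside the dual of the quantised enveloping algebra $U_q(\mathfrak k)$. For each simple root $i$ the inclusion $U_{q_i}(\mathfrak{sl}_2)\hookrightarrow U_q(\mathfrak k)$ dualises to a surjective Hopf $^*$-algebra morphism $\Pol(\bK_q)\twoheadrightarrow\Pol(SU_{q_i}(2))$, and composing it with the infinite-dimensional irreducible representation of $\Pol(SU_{q_i}(2))$ on $\ell^2(\N_0)$ produces $\pi_{s_i}$; dualising the inclusion of the maximal torus gives the characters $\pi_\tau$, factoring through $\Pol(\bK_q)\twoheadrightarrow\Pol(\mathbb T^d)=C(\mathbb T^d)$. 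This is part \ref{Item=Two}. One then sets $\pi_{w,\tau}=(\pi_{s_1}\otimes\cdots\otimes\pi_{s_n}\otimes\pi_\tau)\circ\Delta_{(n+1)}$; since $\Delta$ is a $^*$-homomorphism and a tensor product of $^*$-representations is a $^*$-representation, this is well defined. The first real point is independence of the chosen reduced expression of $w$: by the Matsumoto--Tits theorem any two reduced words are linked by braid moves, so it suffices to prove $(\pi_{s_i}\otimes\pi_{s_j}\otimes\cdots)\circ\Delta_{(m)}\cong(\pi_{s_j}\otimes\pi_{s_i}\otimes\cdots)\circ\Delta_{(m)}$ for a single braid relation, i.e.\ inside the rank-two standard parabolics (types $A_1\times A_1$, $A_2$, $B_2$, $G_2$), where the intertwining unitary is produced by an explicit computation on the orthonormal basis of $\ell^2(\N_0)^{\otimes m}$.

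\emph{Irreducibility, distinctness and exhaustion.} To see $\pi_{w,\tau}$ is irreducible I would show that $\pi_{w,\tau}(\Pol(\bK_q))$ contains all finite-rank operators on the dense span of the basis of $H_w$, reducing through $\Delta$ and induction on $\ell(w)$ to the rank-one case, which is classical for $SU_q(2)$. The parameter $\tau$ is then recovered from the restriction to the torus, and $\ell(w)$ — and thence $w$ — from the position of $\ker\pi_{w,\tau}$ in the natural weight/Bruhat filtration of $\Pol(\bK_q)$, which also gives the inequivalences. For exhaustion one uses the quantum Bruhat decomposition: $\Pol(\bK_q)$ carries a $\bz^d$-grading by torus weights and a decreasing filtration by ideals whose subquotients, indexed by $W$ with Bruhat order, are assembled from the rank-one pieces and $C(\mathbb T^d)$ via $\Delta$. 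Given an irreducible $^*$-representation $\pi$, one locates the smallest stratum on which it is non-zero; on the bottom stratum $\pi$ factors through $C(\mathbb T^d)$ and equals some $\pi_\tau$, and otherwise one uses the comultiplication to split off a tensor factor unitarily equivalent to some $\pi_{s_i}$ and passes to an irreducible representation of a shorter piece, iterating until the bottom is reached. Tracking this carefully identifies $\pi$ with a unique $\pi_{w,\tau}$ and exhibits the composition series of $\C(\bK_q)$, whence $\C(\bK_q)$ is type I.

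\emph{Main obstacles.} I expect the two genuinely hard steps to be the rank-two braid computation of the first stage and the stratification/exhaustion argument of the second: both use the quantised coordinate ring structure rather than soft arguments, while the remaining bookkeeping is routine. Of course the statement is exactly \cite[Theorem 6.2.7]{KS}, and in practice I would simply invoke it.
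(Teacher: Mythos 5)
The paper offers no proof of this statement at all: it is imported verbatim as Theorem 6.2.7 of \cite{KS} (supplemented later by \cite{NeshTuset} for the topology of the spectrum), so your closing remark that one would simply invoke the reference is exactly what the paper does. Your three-stage outline (construction via iterated $\Delta$ and the rank-one $SU_{q_i}(2)$ quotients, Matsumoto--Tits reduction of reduced-word independence to rank-two braid computations, and exhaustion via the Bruhat/weight stratification) is a faithful sketch of Soibelman's actual argument, with the genuinely hard computational steps correctly identified though not carried out.
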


We now need to complement the above theorem further, using also the topological characterisation of the spectrum of $\C(\bK_q)$, due to Neshveyev and Tuset (\cite{NeshTuset}).

\begin{prop}\label{Thm=MassiveDecompnew}
Let $\bK_q$ be as above. Then the following hold.	
	\begin{enumerate}	
 \item \label{Item=One-New} Let $\mu$ be the normalized Lebesgue measure on $\mathbb{T}^d$. Consider the representation
\begin{equation}\label{Eqn=Int}
\pi =  \bigoplus_{w \in W}  \int^\oplus_{\mathbb{T}^d}  \pi_w \otimes \pi_\tau \, d\mu(\tau).
\end{equation}
Then $\pi$ extends from $\Pol(\bK_q)$ to a representation of $\C(\bK_q)$ and this representation is moreover faithful. The image $\pi(\C(\bK_q))$ is contained in   $L_\infty(\mathbb{T}^d, \oplus_w B(H_w)) \simeq \oplus_w B(H_w) \otimes L_\infty(\mathbb{T}^d)$. 
\item \label{Item=Two-New} We have  $\pi(\Pol(\bK_q)) \subseteq \oplus_w B( H_w) \odot \Pol(\mathbb{T}^d)$.  Furthermore there exists a Borel set $X \subseteq \mathbb{T}^d$ with non-empty interior and a non-zero element $x \in \overline{\pi(\Pol(\bK_q))}$ such that for every $w \in W$ the space $H_w \otimes L_2(X)$ is in the kernel of $x$. One can choose $X$ to be the $d$-fold Cartesian product of $\bt_{\uparrow}$.
 \end{enumerate}
\end{prop}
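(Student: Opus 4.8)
The plan is to establish the two parts in turn, using Theorem~\ref{Thm=MassiveDecomp} and the topological description of the spectrum of $\C(\bK_q)$ from \cite{NeshTuset}. For \eqref{Item=One-New}, note first that since $\widehat{\bK_q}$ is amenable $\C(\bK_q)$ is at once the reduced and the universal $\C^*$-completion of $\Pol(\bK_q)$, so every $^*$-representation of $\Pol(\bK_q)$ extends to $\C(\bK_q)$; the representations occurring in \eqref{Eqn=Int} are, by coassociativity, exactly the $\pi_{w,\tau}$ of Theorem~\ref{Thm=MassiveDecomp}, and $\tau\mapsto\pi_{w,\tau}(a)$ is norm continuous for $a\in\Pol(\bK_q)$ — because, by Theorem~\ref{Thm=MassiveDecomp}\eqref{Item=Two}, $\pi_\tau$ is evaluation at $\tau$ after the projection $\Pol(\bK_q)\twoheadrightarrow\Pol(\mathbb{T}^d)$, while the other legs do not involve $\tau$ — so the field $\tau\mapsto\pi_{w,\tau}$ is measurable and $\pi$ extends to $\C(\bK_q)$. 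Each $\int^\oplus_{\mathbb{T}^d}\pi_{w,\tau}(\,\cdot\,)\,d\mu(\tau)$ is a decomposable operator on $H_w\otimes L_2(\mathbb{T}^d)$, hence lies in $B(H_w)\otimes L_\infty(\mathbb{T}^d)$; as $W$ is finite this gives $\pi(\C(\bK_q))\subseteq\oplus_w B(H_w)\otimes L_\infty(\mathbb{T}^d)\simeq L_\infty(\mathbb{T}^d,\oplus_w B(H_w))$. For faithfulness it suffices that $\pi$ is isometric on the dense subalgebra $\Pol(\bK_q)$: for $a\in\Pol(\bK_q)$ the function $\tau\mapsto\|\pi_{w,\tau}(a)\|$ is continuous, so (as $\mu$ has full support) its essential supremum equals $\sup_\tau\|\pi_{w,\tau}(a)\|$, whence $\|\pi(a)\|=\sup_{w,\tau}\|\pi_{w,\tau}(a)\|$, and this equals $\|a\|_{\C(\bK_q)}$ since the $\pi_{w,\tau}$ form a complete set of irreducible representations of $\Pol(\bK_q)$ (Theorem~\ref{Thm=MassiveDecomp}) and $\C(\bK_q)$ is the universal completion.

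For \eqref{Item=Two-New}, take a matrix coefficient $a$ of a finite-dimensional corepresentation: then $\Delta_{(n+1)}(a)$ is a finite sum $\sum a_1\otimes\cdots\otimes a_{n+1}$ of elementary tensors of matrix coefficients, so $\pi_{w,\tau}(a)=\sum\bigl(\pi_{s_1}(a_1)\otimes\cdots\otimes\pi_{s_n}(a_n)\bigr)\,\pi_\tau(a_{n+1})$; since $\pi_{s_i}$ factors through $\Pol(\bK_q)\twoheadrightarrow\Pol(\textup{SU}_q(2))$ and $\pi_\tau(a_{n+1})$ is the value at $\tau$ of a trigonometric polynomial, $\tau\mapsto\pi_{w,\tau}(a)$ lies in $B(H_w)\odot\Pol(\mathbb{T}^d)$, and the first assertion follows because such $a$ span $\Pol(\bK_q)$ and $\oplus_w B(H_w)\odot\Pol(\mathbb{T}^d)$ is a subalgebra. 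For the second assertion, set $X:=\bt_{\uparrow}^{d}$ and $J:=\bigcap_{w\in W}\bigcap_{\tau\in X}\ker\pi_{w,\tau}$, a closed two-sided ideal of $\C(\bK_q)$; for any $0\neq a\in J$ the element $x:=\pi(a)$ is nonzero (by faithfulness of $\pi$), belongs to $\overline{\pi(\Pol(\bK_q))}=\pi(\C(\bK_q))$, and, $\pi(a)$ being decomposable, satisfies $x|_{H_w\otimes L_2(X)}=\int^\oplus_X\pi_{w,\tau}(a)\,d\mu(\tau)=0$ for every $w$. Since $X$ has nonempty interior $(\mathrm{int}\,\bt_{\uparrow})^{d}$, it therefore suffices to show $J\neq\{0\}$, equivalently that $S:=\{[\pi_{w,\tau}] : w\in W,\ \tau\in X\}$ is not dense in $\widehat{\C(\bK_q)}$.

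This last point is the heart of the argument, and the place where \cite{NeshTuset} is genuinely needed. Let $w_0$ be the longest element of the finite Weyl group $W$. From the description of $\widehat{\C(\bK_q)}$ one needs that the stratum $\{[\pi_{w_0,\tau}] : \tau\in\mathbb{T}^d\}$ is an open subset of $\widehat{\C(\bK_q)}$ carrying the usual topology of $\mathbb{T}^d$ — concretely, it is the spectrum of the ``top'' ideal of $\C(\bK_q)$, which has the form $\cK(H_{w_0})\otimes C(\mathbb{T}^d)$, exactly as $\cK\otimes C(\mathbb{T})$ is the top ideal of $\C(\textup{SU}_q(2))$. Granting this, a net in $S$ converging to a point $[\pi_{w_0,\tau'}]$ of that open stratum eventually lies in the stratum, so its $W$-labels become $w_0$ and its $\mathbb{T}^d$-labels converge to $\tau'$, forcing $\tau'\in\overline X=X$; hence the nonempty open set $\{[\pi_{w_0,\tau}] : \tau\in\mathbb{T}^d\setminus X\}$ is disjoint from $\overline S$, so $S$ is not dense and $J\neq\{0\}$. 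Extracting precisely this structural input from \cite{NeshTuset} is what I expect to be the main obstacle.

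An alternative, more computational route — which I expect also works and which bypasses \cite{NeshTuset} — is to imitate the $\textup{SU}_q(2)$ argument directly: one would produce a normal element $c\in\C(\bK_q)$, a suitable closure of a combination of matrix coefficients of the fundamental corepresentations generalising $\widetilde{\gamma}$, such that for every $w$ and every $\tau\in\bt_{\uparrow}^{d}$ the spectrum of $\pi_{w,\tau}(c)$ misses a fixed nonempty open region $V$ of the spectrum of $c$, and then take $x=\pi(f(c))$ with $f$ a nonzero continuous function supported in $V$. The price of this route is simultaneous control of the joint spectra of the relevant coefficients across all the representations $\pi_{w,\tau}$ — precisely the bookkeeping one trades away by invoking the spectrum description of \cite{NeshTuset}.
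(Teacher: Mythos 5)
Your proposal is correct and follows essentially the same route as the paper: both parts reduce the key point to the non-density of $\{[\pi_{w,\tau}]:w\in W,\ \tau\in\bt_{\uparrow}^d\}$ in the spectrum of $\C(\bK_q)$, detected on the stratum of the longest Weyl element. The only divergence is in how the input from \cite{NeshTuset} is used --- the paper invokes the explicit closure formula of Theorem 4.1(ii) there, computing $\textup{cl}\{\pi_{w,t}\}$ directly and observing that $\pi_{w_{max},t}$ lies in it only for $t\in\bt_{\uparrow}^d$, whereas you appeal to the openness of the top stratum via the ideal $\mathcal{K}(H_{w_{max}})\otimes \C(\mathbb{T}^d)$; these are interchangeable consequences of the same structure theory, so the ``main obstacle'' you flag is exactly the step the paper settles by that citation.
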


\begin{proof}
By amenability of $\widehat{\bK_q}$  every representation of $\Pol(\bK_q)$ extends to $\C(\bK_q)$. 
Further, as \eqref{Item=One} in Theorem \ref{Thm=MassiveDecomp} characterizes all irreducible representations of $\Pol(\bK_q)$ we see that $\pi$ must be a faithful representation of $\C(\bK_q)$. It  follows directly from the integral decomposition \eqref{Eqn=Int} that the image of $\Pol(\bK_q)$ is contained in $L_\infty(\mathbb{T}^d; \oplus_w  B(H_w)) \simeq \oplus_w  B(H_w) \otimes L_\infty(\mathbb{T}^d)$. 	
The first statement of \eqref{Item=Two-New} follows from \eqref{Item=Two} of Theorem  \ref{Thm=MassiveDecomp}. For the second statement put $X:=\mathbb{T}_{\uparrow}^d = 	\mathbb{T}_{\uparrow} \times \ldots \times \mathbb{T}_{\uparrow}$, the $d$-fold Cartesian product of $\bt_{\uparrow}$. 
Suppose that the second statement does not hold for this set. This would mean that the $\bigcap_{w \in W,t \in \mathbb{T}_{\uparrow}^d} \textup{Ker} \; \pi_{w, t} = \{0\}$, or in other words, the set $\{\pi_{w,t}: w \in W,t \in \mathbb{T}_{\uparrow}^d\}$ would be dense in the spectrum of $\C(\bK_q)$, equipped with Jacobson topology. This however is false, as follows from the special case of results of \cite{NeshTuset}. Indeed, in the notation of that paper, considering the situation where $S= \prod$ and $L$ is trivial  we are reduced to the study of $\Pol(\bK_q)$. Thus Theorem 4.1 (ii) of \cite{NeshTuset} shows in particular that (again, using the notation of that paper)
\begin{align*} \textup{cl}  \{\pi_{w,t}:w \in W,t \in \mathbb{T}_{\uparrow}^d\} \subset \bigcup_{w \in W}  \textup{cl}  \{\pi_{w,t}:t \in \mathbb{T}_{\uparrow}^d\}= 
\bigcup_{w \in W} \{\pi_{\sigma,t}:\sigma \in W, \sigma \leq w, t \in T_{\sigma,w} \mathbb{T}_{\uparrow}^d\}. 
\end{align*}
	 Then if say $w_{max} \in W$ is the longest element of the Weyl group we see that $\pi_{w_{max},t} \in  \textup{cl}  \{\pi_{w,t}:w \in W,t \in \mathbb{T}_{\uparrow}^d\} $ if and only if $t \in \mathbb{T}_{\uparrow}^d$ and the density statement fails.

\end{proof}

We are ready to formulate the main result of this section.

\begin{thm}
	Let  $q\in (0,1)$, let $K$ be a simply connected semisimple compact Lie group and let $\bK_q$ denote the $q$-deformation  of $K$ in the sense of \cite{KS}. Then $\widehat{\bK_q}$ is not $\C^\ast$-unique. 
\end{thm}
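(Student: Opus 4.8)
The plan is to run the argument of the $SU_q(2)$ case one level up, feeding in the structural information assembled in Proposition~\ref{Thm=MassiveDecompnew}. Since $\widehat{\bK_q}$ is amenable, $\C(\bK_q)$ is the universal enveloping $\C^*$-algebra of $\Pol(\bK_q)$, so by the second statement of Proposition~\ref{universal} it suffices to produce a $\ast$-representation $\rho$ of $\Pol(\bK_q)$ that is injective on $\Pol(\bK_q)$ but whose (automatic) extension to $\C(\bK_q)$ has non-zero kernel.

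I would take the faithful representation $\pi$, the set $X=\mathbb{T}_\uparrow^d$ and the non-zero element $x\in\overline{\pi(\Pol(\bK_q))}=\pi(\C(\bK_q))$ supplied by part~\eqref{Item=Two-New} of Proposition~\ref{Thm=MassiveDecompnew}, and write $x=\pi(\tilde a)$ with $0\neq\tilde a\in\C(\bK_q)$ (possible since $\pi$ is faithful, hence isometric). Because the Weyl group $W$ is finite and $\pi(\Pol(\bK_q))\subseteq\bigoplus_{w\in W}B(H_w)\odot\Pol(\mathbb{T}^d)$, every operator in $\pi(\Pol(\bK_q))$ commutes with the projection $P:=\bigoplus_{w\in W}1_{H_w}\otimes M_{\chi_X}$ onto $\mathcal{H}_X:=\bigoplus_{w\in W}H_w\otimes L_2(X)$ — on the $\Pol(\mathbb{T}^d)$-leg this is just the fact that $M_{\chi_X}$ commutes with $M_f$, $f\in\Pol(\mathbb{T}^d)$. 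Hence $\mathcal{H}_X$ is a reducing subspace and compression to $\mathcal{H}_X$ defines a $\ast$-representation $\rho$ of $\Pol(\bK_q)$; concretely $\rho=\bigoplus_{w\in W}\int^\oplus_X\pi_w\otimes\pi_\tau\,d\mu(\tau)$, the restriction of the direct integral $\pi$ to the positive-measure set $X$. Since $H_w\otimes L_2(X)$ lies in $\ker x$ for every $w$ we have $xP=0$, so $\rho(\tilde a)=PxP=0$, whereas $\tilde a\neq 0$.

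The remaining point, and the only one requiring an argument, is that $\rho$ is injective on $\Pol(\bK_q)$. For $b\in\Pol(\bK_q)$ write $\pi(b)=\sum_{n\in F}S_n\otimes\chi_n$ as a finite sum over a set $F\subseteq\mathbb{Z}^d$ of distinct characters $\chi_n$ of $\mathbb{T}^d$, with $S_n\in\bigoplus_{w\in W}B(H_w)$, using that the characters form a linear basis of $\Pol(\mathbb{T}^d)$. Then $\rho(b)=\sum_{n\in F}S_n\otimes M_{\chi_n|_X}$, and since $X$ has positive measure the functions $\{\chi_n|_X:n\in F\}$ are linearly independent in $L_2(X)$ (a non-zero Laurent polynomial is real-analytic on the connected manifold $\mathbb{T}^d$, hence cannot vanish on a set of positive measure). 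Consequently $\rho(b)=0$ forces $S_n=0$ for all $n$, i.e.\ $\pi(b)=0$ and hence $b=0$. Thus $\rho$ is injective on $\Pol(\bK_q)$ while $\ker\rho$, regarded as a closed ideal of $\C(\bK_q)$, contains $\tilde a\neq 0$; by Proposition~\ref{universal} the algebra $\Pol(\bK_q)$ therefore admits a $\C^*$-completion strictly dominated by $\C(\bK_q)$, so $\widehat{\bK_q}$ is not $\C^*$-unique (equivalently, not $\C^*_r$-unique, by amenability). The genuine content of the proof has already been expended in Proposition~\ref{Thm=MassiveDecompnew} — in particular the existence of $x$, which ultimately rests on the Neshveyev--Tuset description of the spectrum of $\C(\bK_q)$; everything above is formal bookkeeping once that proposition is available.
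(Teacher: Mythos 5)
Your proposal is correct and follows essentially the same route as the paper: restrict the faithful direct-integral representation $\pi$ to $\bigoplus_{w}H_w\otimes L_2(\mathbb{T}_\uparrow^d)$, use Proposition~\ref{Thm=MassiveDecompnew} to see that this restriction stays injective on $\Pol(\bK_q)$ yet annihilates a non-zero element of $\C(\bK_q)$. The only (harmless) differences are that you spell out the injectivity of the restriction via linear independence of characters on a set of positive measure, and you conclude via the ideal criterion of Proposition~\ref{universal} rather than the paper's approximating-sequence argument.
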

\begin{proof}
	 Let $H = \oplus_{w} H_w$.
Let $\mathbb{T}_{\uparrow}^d = 	\mathbb{T}_{\uparrow} \times \ldots \times \mathbb{T}_{\uparrow}$ be the $d$-fold Cartesian product, as in the proof of the last proposition. Suppose that $f$ and $g$ are in  $B(H) \odot \Pol(\mathbb{T}^d)$ and act on $H \otimes L_2(\mathbb{T}^d)$. Let $f_{\uparrow}$ and $g_{\uparrow}$ be their restrictions to the subspace $H \otimes L_2(\mathbb{T}_{\uparrow}^d)$. Then $f = g$ if and only if  $f_{\uparrow} = g_{\uparrow}$. 

Now consider the representation,
\[
\pi_{\uparrow} =  \bigoplus_{w \in W}  \int^\oplus_{\mathbb{T}_{\uparrow}}  \pi_w \otimes \pi_\tau \: d\mu(\tau).
\]
Note that $\pi_{\uparrow}(x) = \pi(x)\vert_{H \otimes L_2(\mathbb{T}_{\uparrow})}$. 
We have from Proposition \ref{Thm=MassiveDecompnew} that $\pi$ maps $\Pol(\bK_q)$ injectively to $B(H) \odot \Pol(\mathbb{T}^d)$ and therefore $\pi_{\uparrow}$ is injective by the first paragraph.  Hence $\Vert x \Vert_{\uparrow} := \Vert \pi_{\uparrow} ( x ) \Vert$ defines a $\C^\ast$-norm on $\Pol(\bK_q)$ which is majorized by the reduced/universal $\C^\ast$-norm. The majorization is proper; indeed, by Theorem \ref{Thm=MassiveDecompnew} \eqref{Item=Two-New} take non-zero $x \in \overline{\pi(\Pol(\bK_q))} \subseteq B(H) \otimes L_\infty(\mathbb{T})$ such that the space   $H \otimes L_2(\mathbb{T}_{\uparrow})$ is in the kernel of $x$. Consider a sequence $x_n$ in $\Pol(\bK_q)$ such that $\pi(x_n)$ converges to $x$ in the norm of $\C(\bK_q)$.  Then $\pi_{\uparrow}(x_n)$ converges to 0 in norm. This concludes that the reduced norm properly majorizes the $\Vert \: \Vert_{\uparrow}$-norm.  		
\end{proof}

\begin{rmk}
The last theorem holds also for $q=1$. More precisely, if $G$ is an infinite compact linear group, then its dual is not $\C^*$-unique. In other words, the unital $^*$-algebra $\Pol(G)$ of coefficients of finite-dimensional unitary representations of $G$ admits non-unique $C^*$-completions. To see it, it suffices to embed concretely $G\subset  U(n)$ by choosing a fundamental representation and choose a non-empty open subset of $G$, say $X$,  whose complement has non-empty interior. Then we can repeat the trick as before, simply using as the relevant Hilbert space $L^2(X)$ (with the Haar measure of $G$), on which $\Pol(G)$ acts by multiplication. We leave the details to the reader. 
\end{rmk}

 
 


Finally we discuss the corresponding problems for algebras of functions on quantum homogeneous spaces. Consider a compact quantum group $\bG$ and a compact quantum subgroup $\bK$, so that we have a surjective Hopf $^*$-map $q:\Pol(\bG)\to \Pol(\bK)$. We can define then the unital $^*$-algebra $\Pol(\bG/\bK):=\{a\in \Pol(\bG):(\id \otimes q)(\Delta(a)) = a \otimes 1\}$. Note that  $\Pol(\bG/\bK)$ admits a maximal $\C^*$-norm. This in fact extends to any `algebraic core' of an action of a compact quantum group, as is shown for example in Proposition 4.1 and Theorem 4.2 of \cite{Kenny}. This means that Proposition \ref{universal} applies to algebras of such type and naturally we can consider the uniqueness of their $\C^*$-completions. One could then ask whether the theorem on non-$\C^*$-uniqueness of duals of $q$-deformations extends to the appropriate function algebras of quantum homogeneous spaces, as studied for example in \cite{NeshTuset}. The proposition below answers it in the negative, already for the example of the Podle\'s sphere.

 \begin{prop}
 	The algebra of (polynomial) functions on the Podle\'s sphere, i.e.\ the unital $^*$-algebra arising as $\Pol(SU_q(2)/\bt)$, is $\C^*$-unique.
 \end{prop}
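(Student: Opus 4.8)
The plan is to apply the criterion in the last sentence of Proposition \ref{universal} to the unital $^*$-algebra $\cA := \Pol(SU_q(2)/\bt)$, which admits a maximal $\C^*$-norm (by the remark preceding Proposition \ref{universal}, citing \cite{Kenny}; alternatively the relations \eqref{Eqn=RelSU2} force its generators $\gamma^*\gamma$ and $\alpha^*\gamma$ to be contractions in every $^*$-representation). Writing $A_u$ for the universal $\C^*$-completion of $\cA$, I must then show that every non-trivial closed two-sided ideal of $A_u$ meets $\cA$ non-trivially.

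The substantive step is the identification of $A_u$. Here I would use the well-known structure of the standard Podle\'s sphere (due to Podle\'s; see also \cite{NeshTuset} and the references there): the $\C^*$-algebra $\C(SU_q(2)/\bt)$ is the minimal unitisation $\widetilde{\mathcal{K}} = \bc\,1 + \mathcal{K}$ of the algebra $\mathcal{K} = \mathcal{K}(\ell^2(\N_0))$ of compact operators, this coincides with both the reduced and the universal completion of $\cA$, and the only other irreducible $^*$-representation is the character $\chi = \epsilon|_{\cA}$ (restriction of the counit), which annihilates $\gamma^*\gamma$ and $\alpha^*\gamma$ and corresponds to the quotient $\widetilde{\mathcal{K}} \twoheadrightarrow \widetilde{\mathcal{K}}/\mathcal{K} \cong \bc$. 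Concretely, these facts can be read off by restricting the representation of $\C(SU_q(2))$ recalled in Section \ref{Sect=qDef} to the fixed-point subalgebra $\cA$ and decomposing the circles $q^k\bt$ into angular Fourier modes: $\cA$ is then carried onto a dense $^*$-subalgebra of (several copies of) a single irreducible representation $\pi$ on $\ell^2(\N_0)$, in which $\gamma^*\gamma$ acts diagonally with eigenvalues $q^{2k}$, $k\in\N_0$, and $\alpha^*\gamma$ acts as a weighted shift with weights tending to $0$; thus $\pi(\cA)$ generates $\mathcal{K}$, the map $\pi$ is faithful on $\cA$, and---using the classification of all bounded irreducible representations of $\cA$---every $^*$-representation of $\cA$ factors through $\widetilde{\mathcal{K}}$, whence $A_u \cong \widetilde{\mathcal{K}}$ with $\cA$ embedded as a dense $^*$-subalgebra.

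It then remains only to observe that $\widetilde{\mathcal{K}}$ has a single non-trivial closed two-sided ideal, namely $\mathcal{K} = \ker\chi$, and that $\gamma^*\gamma \in \cA$ lies in it (since $\chi(\gamma^*\gamma) = 0$) and is non-zero (since $\pi(\gamma^*\gamma) \neq 0$, $\pi$ being faithful on $\cA$). Hence the unique non-trivial closed ideal of $A_u$ intersects $\cA$ non-trivially, and Proposition \ref{universal} yields that $\Pol(SU_q(2)/\bt)$ is $\C^*$-unique. The one step carrying real weight is the identification $A_u \cong \widetilde{\mathcal{K}}$: it needs the \emph{complete} list of bounded irreducible representations of $\cA$---so that $\cA$ admits no $\C^*$-completion strictly larger than $\widetilde{\mathcal{K}}$---together with faithfulness of $\pi$ on $\cA$---which gives the embedding $\cA\hookrightarrow\widetilde{\mathcal{K}}$ and shows that any strictly larger completion would carry a non-trivial ideal disjoint from $\cA$, violating the criterion of Proposition \ref{universal}. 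Once these classical facts are granted, the verification of the ideal criterion is immediate and the remainder is formal.
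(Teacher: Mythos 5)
Your proof is correct and follows essentially the same route as the paper's, namely identifying the universal completion $A_u$ of $\Pol(SU_q(2)/\bt)$ with the minimal unitisation $\bc 1+\mathcal{K}$ of the compacts and then appealing to Proposition \ref{universal}; the only (cosmetic) difference is that you invoke the ideal-intersection criterion in the second part of that proposition, exhibiting $\gamma^*\gamma$ as a non-zero element of $\cA\cap\mathcal{K}$, whereas the paper uses the first part via the just infiniteness of $\bc 1+\mathcal{K}$. Both arguments rest on the same classical input (the classification of irreducible representations of the standard Podle\'s sphere), which you correctly flag as the step carrying the real weight.
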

 \begin{proof}
 	The result can be proved directly -- using the presentation of $\Pol(SU_q(2)/\bt)$ via generators and relations (see \cite{Podles} or \cite{Dabrowski}). Alternatively, we can use the fact that the universal completion of $\Pol(SU_q(2)/\bt)$ is the unitisation of the algebra of compact operators (hence a just infinite $\C^*$-algebra, as an extension of a simple $\C^*$-algebra by a finite one, see \cite{GrigorchukMusatRordam}) and appeal to
 	Proposition \ref{universal}.
 \end{proof}

 \section{An example of a $\C^*$-unique discrete quantum group which is not locally finite}\label{Sect=Example}
In this section we give an example of a compact quantum group $\bG_q^\theta$ for which $\Pol(\bG_q^\theta)$ has a unique $\C^\ast$-completion. The corresponding discrete quantum group $\widehat{\bG_q^\theta}$   is not locally finite (see Definition \ref{localfin}).   In Question 6.8 of \cite{GrigorchukMusatRordam} the authors ask whether $\C^*$-uniqueness of  a discrete group implies that the group is locally finite.   Our example shows that in the theory of discrete quantum groups the answer to this question is negative.

We construct the example as follows. Throughout this section we fix $q \in (-1,1) \backslash \{ 0 \}$ and an irrational number $\theta \in (0, 1)$.
Let  $\bG_q = SU_q(2)$, with generators $\alpha$ and $\gamma$ in $\Pol(\bG_q)$ satisfying the relations \eqref{Eqn=RelSU2}. Consider the automorphism $\rho_\theta$ of $\Pol(\bG_q)$ given by:
\[
\alpha \mapsto \alpha, \qquad \gamma \mapsto e^{2\pi i \theta} \gamma. 
\]  
Analysing the defining relations of $\Pol(\bG_q)$ we find that $\rho_\theta$ indeed extends to a $\ast$-automorphism $\rho_\theta: \Pol(\bG_q) \rightarrow \Pol(\bG_q)$. Further,  the equalities \eqref{Eqn=SU2Coproduct} imply that $\Delta \circ \rho_\theta = (\rho_\theta \otimes \rho_\theta) \circ \Delta$ so that $\rho_\theta$ is a Hopf-$\ast$-automorphism of $\Pol(\bG_q)$. 
 Let $\bG_q^\theta = \bG_q \rtimes_{\rho_\theta} \mathbb{Z}$ be the resulting  crossed product quantum group (see Section 6 of \cite{FMP} for the details of the construction). More precisely, let $\Pol(\bG_q^\theta)$ be the algebraic crossed product $\Pol(\bG_q) \rtimes_{\rho_\theta} \mathbb{Z}$ which is the  $\ast$-algebra generated by $\Pol(\bG_q)$ and a unitary $u_\theta$ subject to the relations 
\[
u_\theta^\ast  \gamma u_\theta =  e^{2\pi i \theta} \gamma, \qquad 
u_\theta^\ast  \alpha u_\theta = \alpha.
\] 
The coproduct on $\Pol(\bG_q^\theta)$ is the extension of the coproduct from $\Pol(\bG_q)$  obtained by setting
\[
\Delta(u_{\theta}) = u_\theta \otimes u_\theta. 
\]
It follows from the defining relations that indeed $\Delta$ extends to a $\ast$-homomorphism $\Delta: \Pol(\bG_q^\theta) \rightarrow \Pol(\bG_q^\theta) \odot \Pol(\bG_q^\theta)$. 
In fact $\Pol(\bG_q^\theta)$ is a Hopf-$\ast$-algebra, and the elements $\alpha^k \gamma^m (\gamma^\ast)^n u_\theta^l$ with $k,m,n \in \mathbb{N}_{0}, l \in \mathbb{Z}$ form a linear basis of $\Pol(\bG_q^\theta)$. 
As explained for example in \cite{FMP} the universal completion of $\Pol(\bG_q^\theta)$ yields the $\C^*$-algebra $\C(\bG_q^\theta)$ fitting into the Woronowicz quantum group theory (and arising simply as the $\C^*$-algebraic crossed product  $\C(\bG_q) \rtimes_{\rho_\theta} \mathbb{Z}$). Basic properties of crossed products by $\bz$ and Theorem \ref{amenableknown} show that $\widehat{\bG_q^\theta}$ is amenable (this will also follow from the results below).

At this point we also recall the definition of the non-commutative torus $\mathbb{T}_\theta$. We define $\Pol(\mathbb{T}_\theta)$ as the $\ast$-algebra generated by two unitaries $v_\theta$ and $w_\theta$  such that $w_\theta v_\theta = e^{2\pi i \theta} v_\theta w_\theta$. Its universal completion is a C$^\ast$-algebra denoted by $\C(\mathbb{T}_\theta)$.   Recall that $\theta \in (0,1)$ is irrational. We record now a well-known fact.


\begin{lem} \label{lem:Polunique}
	The $\ast$-algebra 
	$\Pol(\mathbb{T}_\theta)$ is $\C^\ast$-unique. 
\end{lem}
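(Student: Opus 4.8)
The plan is to show that every irreducible representation of $\Pol(\mathbb{T}_\theta)$ is automatically bounded and that the supremum over all such representations recovers the unique $\C^*$-norm, which amounts to the standard fact that the irrational rotation algebra is simple. First I would recall that since $\theta$ is irrational, the universal $\C^*$-algebra $\C(\mathbb{T}_\theta)$ is simple (this is the classical theorem of Slawny/Rieffel; alternatively it follows from the fact that the dual action of $\mathbb{T}^2$ is ergodic and the crossed-product description $\C(\mathbb{T}_\theta) \cong C(\mathbb{T}) \rtimes_\theta \mathbb{Z}$ with the rotation being a minimal homeomorphism). Given this, the argument is short.

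The key steps, in order: (1) Note that $\Pol(\mathbb{T}_\theta)$ is spanned by the monomials $v_\theta^m w_\theta^n$, $m,n \in \mathbb{Z}$, which are all unitary, so any representation $\pi: \Pol(\mathbb{T}_\theta) \to B(\cH)$ sends each such monomial to a unitary and is therefore automatically bounded (indeed contractive) and extends to a $\C^*$-completion; in particular any pre-$\C^*$-norm $\|\cdot\|$ on $\Pol(\mathbb{T}_\theta)$ satisfies $\|a\| \le \|a\|_u$ for all $a$, where $\|\cdot\|_u$ denotes the universal norm. (2) Conversely, given a pre-$\C^*$-norm $\|\cdot\|$, let $A$ be the completion and $\pi: \C(\mathbb{T}_\theta) \to A$ the induced surjective $^*$-homomorphism extending the identity on $\Pol(\mathbb{T}_\theta)$. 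Since $\Pol(\mathbb{T}_\theta)$ is dense in $A$ and injects into $A$, the map $\pi$ is nonzero; as $\C(\mathbb{T}_\theta)$ is simple, $\pi$ is injective, hence isometric. (3) Therefore $\|a\| = \|\pi(a)\|_A = \|a\|_u$ for all $a \in \Pol(\mathbb{T}_\theta)$, so the pre-$\C^*$-norm is unique.

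I expect the only nontrivial input to be the simplicity of $\C(\mathbb{T}_\theta)$, which I would simply cite rather than prove; everything else is a formal consequence of the fact that $\Pol(\mathbb{T}_\theta)$ is the linear span of unitaries together with the definition of $\C^*$-uniqueness. The main obstacle, such as it is, is purely expository: deciding how much of the simplicity argument to reproduce versus cite. Since the lemma is explicitly labelled ``a well-known fact,'' a one-line citation of simplicity of the irrational rotation algebra followed by the three-step deduction above is the appropriate level of detail.
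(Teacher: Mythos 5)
Your proposal is correct and follows essentially the same route as the paper: both arguments reduce the claim to the simplicity of the irrational rotation algebra $\C(\mathbb{T}_\theta)$ (cited, via the crossed-product description $C(\mathbb{T})\rtimes_\theta\mathbb{Z}$) and then observe that any injective representation of $\Pol(\mathbb{T}_\theta)$ extends to a nonzero, hence injective, hence isometric representation of $\C(\mathbb{T}_\theta)$. Your preliminary step (1), noting that $\Pol(\mathbb{T}_\theta)$ is spanned by unitaries so that every representation is bounded and dominated by the universal norm, is left implicit in the paper but is a correct and harmless addition.
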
	
\begin{proof}
	This is a consequence of the simplicity of the universal C$^\ast$-algebra $\C(\mathbb{T}_\theta)$. Indeed, $\C(\mathbb{T}_\theta)$ admits a crossed product decomposition $\C(\mathbb{T}) \rtimes \mathbb{Z}$ where $\mathbb{Z}$ acts by means of a rotation by $\theta$. Since this action is topologically transitive as $\theta$ is irrational, we find that $\C(\mathbb{T}_\theta)$ is simple, c.f. \cite{RieffelPacific} for an overview of relevant results. Now suppose that $\pi: \Pol(\mathbb{T}_\theta) \rightarrow B(H)$ is an injective $\ast$-homomorphism. It extends to a $\ast$-homomorphism $\C(\mathbb{T}_\theta) \rightarrow B(H)$ which by simplicity of $\C(\mathbb{T}_\theta)$ is isometric. Therefore, the norm on $\pi(\Pol(\mathbb{T}_\theta))$ equals the norm of   $\C(\mathbb{T}_\theta)$. 
\end{proof}

For a self-adjoint operator $x$ and a set $A \subseteq \mathbb{R}$, we let $p_A(x)$ be its spectral projection. We let $B_\delta(\lambda)$ be the open ball centered at $\lambda \in \mathbb{R}$ with radius $\delta > 0$. 

\begin{lem}\label{Lem=qCom}
Suppose that $x, y \in B(H)$,  $x$ is normal and $xy = q yx$ for some $q \in \mathbb{R} \backslash \{ 0 \}$. If $\lambda \in \sigma(x)$ then  $q \lambda \in \sigma(x)$ or $y p_{B_{\delta}(\lambda)}(x) \rightarrow 0$ as $\delta \rightarrow 0$. 
\end{lem}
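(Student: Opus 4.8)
The plan is to analyze the operator $y$ on spectral subspaces of the normal operator $x$, splitting $\sigma(x)$ according to proximity to the point $q\lambda$. Fix $\lambda \in \sigma(x)$ and suppose $q\lambda \notin \sigma(x)$; the goal is then to show $yp_{B_\delta(\lambda)}(x) \to 0$. Since $q\lambda$ is not in the (closed) spectrum of $x$, there is an $\varepsilon > 0$ such that $B_{\varepsilon}(q\lambda)\cap \sigma(x) = \emptyset$, so the spectral projection $p_{B_{\varepsilon}(q\lambda)}(x)$ is zero. The intertwining relation $xy = qyx$, which iterates to $f(x)y = yf(qx)$ for polynomials and then (by the functional calculus for normal operators, approximating $f$ uniformly on compact neighborhoods of the relevant spectra) for bounded Borel functions $f$, is the key tool: applying it with $f = \mathbf{1}_{B_\delta(\lambda)}$ should give $p_{B_\delta(\lambda)}(x)\, y = y\, p_{B_\delta(q\lambda)}(x)$, where $B_\delta(q\lambda)$ here means the dilate $q\cdot B_\delta(\lambda)$, i.e. $B_{|q|\delta}(q\lambda)$.

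The main point is to justify the identity $g(x)\,y = y\,(g\circ m_q)(x)$ where $m_q(z) = q z$, at least for indicator functions of balls. One clean way: the relation $xy = qyx$ gives $x(y y^*) = q (y x) y^* $... this is a little awkward since $y$ need not be normal, so instead I would argue directly on vectors. For $\xi \in \mathrm{ran}\, p_{B_\delta(\lambda)}(x)$, the relation forces $y\xi$ to lie in the spectral subspace of $x$ over the set $q\cdot \overline{B_\delta(\lambda)}$ (one sees this by noting that $x$ restricted to the cyclic subspace behaves correctly under the polynomial relation, and passing to Borel functional calculus; equivalently, for any Borel set $E$ with $q^{-1}E \cap \overline{B_\delta(\lambda)} = \emptyset$ one checks $p_E(x)\, y\, p_{B_\delta(\lambda)}(x) = 0$ by approximating $\mathbf 1_E$ and $\mathbf 1_{B_\delta(\lambda)}$ by polynomials and using $xy = qyx$). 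Taking $\delta$ small enough that $q\cdot \overline{B_\delta(\lambda)} \subseteq B_\varepsilon(q\lambda)$, which is possible precisely because $q \neq 0$, we conclude $y\, p_{B_\delta(\lambda)}(x) = p_{q\cdot \overline{B_\delta(\lambda)}}(x)\, y\, p_{B_\delta(\lambda)}(x) = 0$ for all such small $\delta$. Hence $y\, p_{B_\delta(\lambda)}(x) \to 0$ (in fact it is eventually $0$), which is even stronger than claimed.

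The anticipated obstacle is the rigorous passage from the algebraic relation $xy = qyx$ to statements about Borel spectral projections when $y$ is not assumed normal or self-adjoint: one cannot simply ``take functional calculus of both sides.'' The remedy is to phrase everything as: for polynomials $p, r$ in $z,\bar z$, the relation $x y = q y x$ together with $x^* y = \bar q\, y\, x^*$ (the adjoint of $\bar x \bar y = \bar q\, \bar y\, \bar x$, using normality of $x$ so that $x^*$ obeys the conjugate relation) yields $p(x,x^*)\, y = y\, p(qx, \bar q\, x^*)$; then approximate indicator functions of the relevant disjoint compact sets uniformly (Stone--Weierstrass on a compact set containing $\sigma(x) \cup q\sigma(x)$, choosing polynomials close to $0$ on one piece and close to $1$ on the other) to obtain $p_E(x)\, y\, p_F(x) = 0$ whenever $\overline{E}$ and $q\overline{F}$ are disjoint compact sets. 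Everything else is a routine continuity/limit argument.
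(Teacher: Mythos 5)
Your proposal takes a genuinely different route from the paper's. The paper argues by contraposition with approximate eigenvectors: if $y\,p_{B_\delta(\lambda)}(x)\not\to 0$, one picks unit vectors $\xi_k$ in the range of $p_{B_{1/n_k}(\lambda)}(x)$ with $\|y\xi_k\|>r$; the spectral theorem gives $\|(x-\lambda)\xi_k\|\to 0$, the commutation relation gives $(x-q\lambda)y\xi_k=qy(x-\lambda)\xi_k\to 0$, and since $\|y\xi_k\|>r$ this forces $x-q\lambda$ to be non-invertible. That argument is four lines, uses the relation once, on vectors, and needs no functional calculus beyond the fact that vectors in the range of $p_{B_\delta(\lambda)}(x)$ are approximate eigenvectors of $x$. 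Your argument instead pushes spectral projections through $y$ and yields the strictly stronger conclusion that $y\,p_{B_\delta(\lambda)}(x)=0$ for all sufficiently small $\delta$ once $q\lambda\notin\sigma(x)$; that extra strength is a real advantage, bought at the price of considerably heavier machinery (intertwining of Borel functional calculi).

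The one genuine defect is your justification of the adjoint relation $x^*y=\bar q\,yx^*$. Taking adjoints of $xy=qyx$ gives $y^*x^*=\bar q\,x^*y^*$, a relation between $x^*$ and $y^*$, not between $x^*$ and $y$; the phrase ``the adjoint of $\bar x\bar y=\bar q\,\bar y\bar x$'' does not parse for operators. The identity you need is precisely the Fuglede--Putnam theorem applied to $xy=y(qx)$ with the normal operators $x$ and $qx$, and without it you only control polynomials in $x$ (not in $x$ and $x^*$), which does not suffice to reach indicator functions when $x$ is merely normal. (When $x$ is self-adjoint, as in the first application in the paper, one-variable Weierstrass approximation suffices and Fuglede--Putnam is not needed.) A second, smaller point: indicator functions cannot be approximated uniformly by polynomials on $\sigma(x)$; either pass from continuous functions to bounded Borel functions by bounded pointwise limits, or avoid indicators on the left altogether by choosing a continuous $g$ equal to $1$ on $\sigma(x)$ and $0$ on $q\overline{B_\delta(\lambda)}$ (disjoint compact sets for small $\delta$), whence $y\,p_{B_\delta(\lambda)}(x)=g(x)\,y\,p_{B_\delta(\lambda)}(x)=y\,g(qx)\,p_{B_\delta(\lambda)}(x)=0$. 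With these two repairs your proof is correct.
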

\begin{proof}
Suppose that we do not have that $y p_{B_{\delta}(\lambda)}(x) \rightarrow 0$ as $\delta \rightarrow 0$. Then there exists $r>0$, an increasing sequence of positive integers  $(n_k)_{k\in\N}$ and a sequence of unit vectors $\xi_k \in H$ such that for every $k \in \N$ we have  $\Vert y \xi_k \Vert > r$ and $\xi_k \in p_{B_{1/{n_k}}(\lambda)}(x) $. A basic version of the spectral theorem shows that $\|x \xi_k - \lambda \xi_k\|$ tends to $0$ as $k$ tends to infinity. Then $(x - q \lambda) y \xi_k = y q (x -  \lambda)  \xi_k \rightarrow 0$. But this can only happen if $x - q\lambda$ is not invertible. So  $q \lambda \in \sigma(x)$.  
\end{proof}


\begin{lem}\label{Lem=GammaSpec}
	Suppose that $\Pol(  \bG_q^\theta)$ is represented on a Hilbert space $H$. We have $\sigma(\gamma^\ast \gamma) = \{0\} \cup \bigcup_{n \in \mathbb{N}_0} q^{2n}$ and $\sigma(\gamma) = \{0\} \cup \bigcup_{n \in \mathbb{N}_0} q^{n} \mathbb{T}$.
\end{lem}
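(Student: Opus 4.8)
The plan is to first pin down $\sigma(\gamma^\ast\gamma)$ from the $SU_q(2)$-relations \eqref{Eqn=RelSU2} together with Lemma \ref{Lem=qCom}, and then to promote this to a statement about $\sigma(\gamma)$ by using the crossed-product relation $u_\theta^\ast\gamma u_\theta=e^{2\pi i\theta}\gamma$ and the irrationality of $\theta$. We use that the representation is faithful (the relevant case), so that $\gamma$ acts as a non-zero operator; in particular $\sigma(\gamma^\ast\gamma)\neq\{0\}$. From \eqref{Eqn=RelSU2} one reads off that $\gamma$ is normal, that $\gamma^\ast\gamma=1-\alpha^\ast\alpha$ is a positive contraction (so $\sigma(\gamma^\ast\gamma)\subseteq[0,1]$), and that $\alpha\alpha^\ast=1-q^2\gamma\gamma^\ast=1-q^2\gamma^\ast\gamma$.

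A short computation from $\alpha\gamma=q\gamma\alpha$ and $\alpha\gamma^\ast=q\gamma^\ast\alpha$ gives $\alpha(\gamma^\ast\gamma)=q^2(\gamma^\ast\gamma)\alpha$, and, taking adjoints (recall $q\in\br$), also $(\gamma^\ast\gamma)\alpha^\ast=q^2\alpha^\ast(\gamma^\ast\gamma)$. I would then apply Lemma \ref{Lem=qCom} twice with $x=\gamma^\ast\gamma$: once with $y=\alpha^\ast$ (where $xy=q^2yx$) and once with $y=\alpha$ (where $xy=q^{-2}yx$). In each case the alternative $y\,p_{B_\delta(\lambda)}(\gamma^\ast\gamma)\to 0$ must be handled. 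Writing $p_\delta=p_{B_\delta(\lambda)}(\gamma^\ast\gamma)$, which is non-zero for every $\delta>0$ since $\lambda\in\sigma(\gamma^\ast\gamma)$, and noting that $\gamma^\ast\gamma$ is within $\delta$ of $\lambda$ on the range of $p_\delta$, one gets $\|\alpha^\ast p_\delta\|^2=\|p_\delta\alpha\alpha^\ast p_\delta\|=\|p_\delta(1-q^2\gamma^\ast\gamma)p_\delta\|\geq |1-q^2\lambda|-q^2\delta$ and likewise $\|\alpha p_\delta\|^2=\|p_\delta\alpha^\ast\alpha p_\delta\|\geq |1-\lambda|-\delta$. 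Since $|q|<1$ and $\sigma(\gamma^\ast\gamma)\subseteq[0,1]$ we have $1-q^2\lambda>0$, so $\alpha^\ast p_\delta\not\to 0$ and the lemma forces $q^2\lambda\in\sigma(\gamma^\ast\gamma)$ for every $\lambda\in\sigma(\gamma^\ast\gamma)$; and $\alpha p_\delta\to 0$ can occur only when $\lambda=1$, so $q^{-2}\lambda\in\sigma(\gamma^\ast\gamma)$ for every $\lambda\in\sigma(\gamma^\ast\gamma)\setminus\{1\}$.

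These two shift properties determine $\sigma(\gamma^\ast\gamma)$: pick $\lambda\in\sigma(\gamma^\ast\gamma)$ with $\lambda>0$; iterating the ``up'' implication would put $q^{-2}\lambda,q^{-4}\lambda,\dots$ into $[0,1]$, which is impossible unless we hit the value $1$ at some stage, so $\lambda=q^{2k}$ for some $k\in\NN$, and in particular $1\in\sigma(\gamma^\ast\gamma)$; feeding $1$ into the ``down'' implication gives $q^{2n}\in\sigma(\gamma^\ast\gamma)$ for all $n\in\NN$, and closedness of the spectrum adds the limit point $0$. Hence $\sigma(\gamma^\ast\gamma)=\{0\}\cup\{q^{2n}:n\in\NN\}$. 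To pass to $\gamma$: as $\gamma$ is normal, $|\gamma|=(\gamma^\ast\gamma)^{1/2}$ lies in the commutative $\C^\ast$-algebra $\C^\ast(\gamma)\cong C(\sigma(\gamma))$ and corresponds to $z\mapsto|z|$, so $\{|z|:z\in\sigma(\gamma)\}=\sigma(|\gamma|)=\{0\}\cup\{|q|^n:n\in\NN\}$; thus $\sigma(\gamma)\subseteq\{0\}\cup\bigcup_{n\in\NN}|q|^n\bt$, it contains $0$, and it meets every circle $|q|^n\bt$. Finally the relation $u_\theta^\ast\gamma u_\theta=e^{2\pi i\theta}\gamma$ makes $\gamma$ unitarily equivalent to $e^{2\pi i\theta}\gamma$, so $\sigma(\gamma)=e^{2\pi in\theta}\sigma(\gamma)$ for all $n\in\bz$; as $\theta$ is irrational, $\{e^{2\pi in\theta}:n\in\bz\}$ is dense in $\bt$ and $\sigma(\gamma)$ is closed, so $\sigma(\gamma)$ is invariant under all of $\bt$. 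A non-empty rotation-invariant subset of a circle centred at the origin is that whole circle, hence $\sigma(\gamma)\supseteq|q|^n\bt$ for every $n$, and therefore $\sigma(\gamma)=\{0\}\cup\bigcup_{n\in\NN}|q|^n\bt=\{0\}\cup\bigcup_{n\in\NN}q^n\bt$ (as $q^n\bt=|q|^n\bt$ for each $n$).

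The main obstacle is the middle step: controlling the dichotomy in Lemma \ref{Lem=qCom}, i.e.\ excluding (or locating) the degenerate alternative $y\,p_{B_\delta(\lambda)}(\gamma^\ast\gamma)\to0$. This is precisely where the two ``norm'' relations $\alpha^\ast\alpha+\gamma^\ast\gamma=1$ and $\alpha\alpha^\ast+q^2\gamma\gamma^\ast=1$ and the hypothesis $|q|<1$ must be fed in; once the two shift properties for $\sigma(\gamma^\ast\gamma)$ are established, the remainder (assembling the spectrum, passing from $\gamma^\ast\gamma$ to $\gamma$, and the rotation-invariance argument via $u_\theta$) is routine.
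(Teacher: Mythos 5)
Your proof is correct and follows essentially the same route as the paper's: the same two $q^{2}$-commutation relations fed into Lemma \ref{Lem=qCom}, with the degenerate alternative excluded via the relations $\alpha^\ast\alpha+\gamma^\ast\gamma=1$ and $\alpha\alpha^\ast+q^{2}\gamma\gamma^\ast=1$, followed by spectral mapping and rotation-invariance under $u_\theta$. Two minor points in your favour: you replace the paper's application of Lemma \ref{Lem=qCom} with the non-real constant $e^{2\pi i\theta}$ (outside its stated hypothesis $q\in\br\setminus\{0\}$, though the lemma's proof does extend) by a direct unitary-conjugation argument, and you make explicit the necessary assumption that $\gamma$ acts as a non-zero operator, which the paper leaves implicit.
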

\begin{proof}	
	Take $\lambda \in \sigma(\gamma^\ast \gamma)$. As $\alpha \gamma^\ast \gamma = q^2 \gamma^\ast \gamma \alpha$ we find by Lemma \ref{Lem=qCom} that $q^{-2} \lambda \in \sigma(\gamma^\ast \gamma)$  or $\alpha p_{B_\delta(\lambda)}(\gamma^\ast \gamma) \rightarrow 0$ as $\delta \searrow 0$. Suppose that $n \in \N_0$ is maximal such that $\lambda q^{-2n} \in \sigma(\gamma)$.  We find that $\alpha p_{B_{\delta}(\lambda q^{-2n})}(\gamma^\ast \gamma) \rightarrow 0$ in norm as $\delta \searrow 0$. Therefore, for $\varepsilon > 0$ we may pick $\delta>0$ small enough so that 
	\[
	\begin{split}
	 \vert 1 - \lambda q^{-2n} \vert = & \Vert    (1 - \lambda q^{-2n}) p_{B_{\delta}(\lambda q^{-2n})}(\gamma^\ast \gamma) \Vert \\
	\leq &  \Vert    (-\alpha^\ast \alpha + 1 - \lambda q^{-2n}) p_{B_{\delta}(\lambda q^{-2n})}(\gamma^\ast \gamma) \Vert + \varepsilon \\
	= &   \Vert    (\gamma^\ast \gamma - \lambda q^{-2n}) p_{B_{\delta}(\lambda q^{-2n})}(\gamma^\ast \gamma) \Vert + \varepsilon  
	\leq 2 \varepsilon.
	\end{split}
	\] 
	Therefore, $\lambda q^{-2n} = 1$. This shows that $\sigma(\gamma^\ast \gamma) \subseteq \cup_{0 \leq n \leq N} q^{2n}$ for some $N \in \mathbb{N}_0 \cup \{ \infty \}$. Applying the same argument to the commutation relation  $q^2 \alpha^\ast \gamma^\ast \gamma =  \gamma^\ast \gamma \alpha^\ast$ shows that in fact $N = \infty$. Indeed, suppose that $N$ is finite. It follows that if $\lambda \in \sigma(\gamma^\ast \gamma)$ then $q^2 \lambda \in \sigma(\gamma^\ast \gamma)$ or  $\alpha^\ast p_{B_\delta(\lambda)}(\gamma^\ast \gamma) \rightarrow 0$ as $\delta \searrow 0$. So we must have $\alpha^\ast p_{B_\delta(q^{2N})}(\gamma^\ast \gamma) \rightarrow 0$ as $\delta \searrow 0$. But this entails that for $\varepsilon > 0$ we may pick $\delta > 0$ small, so that 
	\[
	1 = \Vert (\alpha \alpha^\ast + q^2 \gamma^\ast \gamma) p_{B_\delta(q^{2N})}(\gamma^\ast \gamma) \Vert \leq q^{2 + 2N} +\varepsilon.
	\]
	 This is a contradiction, so $N= \infty$.

	Secondly, since $\gamma$ is normal it generates a commutative C$^\ast$-algebra with spectrum $\sigma(\gamma)$. By the first part and the spectral mapping theorem it follows  that  $\sigma(\gamma) \subseteq \{0\} \cup \bigcup_{n \in \mathbb{N}_0} q^n \mathbb{T}$ and that  for every $n \in \mathbb{N}_0$ we have that $\sigma(\gamma) \cap q^n \mathbb{T}$ is  non-empty. 
	Since $\gamma u_\theta = e^{2\pi i \theta} u_\theta \gamma$ and $u_\theta$ is unitary it follows by Lemma \ref{Lem=qCom} that if $\lambda \in \sigma(\gamma)$ then $e^{2\pi i \theta}  \lambda \in \sigma(\gamma)$. Since $\theta$ is irrational and the spectrum is closed this implies that $\mathbb{T} \lambda \in \sigma(\gamma)$. Hence $\sigma(\gamma) =\{0\} \cup \bigcup_{n \in \mathbb{N}_0} q^{n} \mathbb{T}$. 
\end{proof}

\begin{lem}\label{Lem=UniCon}
	Suppose that $\Pol(\bG_q^\theta)$ is represented on a Hilbert space $H$. For $n \in \mathbb{N}_0$, let $p_n$ be the spectral projection of $\gamma$ corresponding to the circle $q^n \mathbb{T}$ and let $K_n = p_n H$. For every $n \in \mathbb{N}_0$ there exists a unitary  $v_n: K_0 \rightarrow K_n$ such that $v_n^\ast u_\theta  v_n = u_\theta p_0$, $v_n^\ast \gamma  v_n = q^n \gamma p_0$. Further, for each $m,n, k \in \N_0$ we have  $p_m (\alpha^\ast)^k p_n = \delta_{n+k, m} c_q(m,n) v_{m,n}$, where $v_{m,n} = v_m v_n^\ast$ and for $m > n$, 
	\begin{equation}\label{Eqn=Cqn}
	c_q(m,n) = \sqrt{ (1-q^{2m}) (1-q^{2m-2}) \cdots (1-q^{2n+2})}.
	\end{equation}
\end{lem}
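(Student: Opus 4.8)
The heart of the matter is that, with respect to the orthogonal family $(K_n)_{n\in\N_0}$, the operators $\alpha$ and $\alpha^\ast$ are weighted one‑sided shifts, and the $v_n$ are the unitaries that ``translate'' $K_0$ onto $K_n$. I would begin with this shift structure. Recall from the proof of Lemma~\ref{Lem=GammaSpec} that $\alpha(\gamma^\ast\gamma) = q^2(\gamma^\ast\gamma)\alpha$, hence also $(\gamma^\ast\gamma)\alpha^\ast = q^2\alpha^\ast(\gamma^\ast\gamma)$. By Lemma~\ref{Lem=GammaSpec}, $\sigma(\gamma^\ast\gamma) = \{0\}\cup\{q^{2n}:n\in\N_0\}$, and since $z\mapsto|z|^2$ carries $q^n\mathbb{T}$ onto $\{q^{2n}\}$, the projection $p_n$ is precisely the spectral projection of $\gamma^\ast\gamma$ for the eigenvalue $q^{2n}$. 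Hence, for $\xi\in K_n$, the relations above force $(\gamma^\ast\gamma)(\alpha\xi) = q^{2(n-1)}\alpha\xi$ and $(\gamma^\ast\gamma)(\alpha^\ast\xi) = q^{2(n+1)}\alpha^\ast\xi$; since $q^{-2}\notin\sigma(\gamma^\ast\gamma)$ this yields $\alpha p_0 = 0$, $\alpha p_n\subseteq K_{n-1}$ for $n\geq1$, and $\alpha^\ast p_n\subseteq K_{n+1}$ for all $n$. In particular $p_m\alpha^\ast p_n=0$ unless $m=n+1$, and $(\alpha p_n)^\ast = \alpha^\ast p_{n-1}$.

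Next I would build the unitaries. Using $\alpha^\ast\alpha = 1-\gamma^\ast\gamma$ and $\alpha\alpha^\ast = 1-q^2\gamma^\ast\gamma$ (the second by normality of $\gamma$), one gets $p_n\alpha^\ast\alpha p_n = (1-q^{2n})p_n$ and $p_{n-1}\alpha\alpha^\ast p_{n-1} = (1-q^{2n})p_{n-1}$; as $1-q^{2n}>0$ for $n\geq1$, the element $w_n := (1-q^{2n})^{-1/2}\alpha p_n$ is a unitary $K_n\to K_{n-1}$ with adjoint $w_n^\ast = (1-q^{2n})^{-1/2}\alpha^\ast p_{n-1}$. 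Put $v_0:=p_0$ and $v_n := w_n^\ast w_{n-1}^\ast\cdots w_1^\ast$, a unitary $K_0\to K_n$, so $v_n^\ast v_n = p_0$ and $v_nv_n^\ast = p_n$. The intertwining relations are now formal. Since $u_\theta^\ast\gamma u_\theta = e^{2\pi i\theta}\gamma$ we have $u_\theta^\ast(\gamma^\ast\gamma)u_\theta = \gamma^\ast\gamma$, so $u_\theta$ commutes with every $p_n$; combined with $u_\theta^\ast\alpha u_\theta = \alpha$ this shows $u_\theta$ commutes with each $w_n$, $w_n^\ast$, and hence with $v_n$, giving $v_n^\ast u_\theta v_n = v_n^\ast v_n u_\theta = u_\theta p_0$. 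For $\gamma$, multiplying $\alpha\gamma = q\gamma\alpha$ on the right by $p_n$ and using that $\gamma$ commutes with $p_n$ while $\alpha p_n$ lands in $K_{n-1}$ gives $(\alpha p_n)(\gamma p_n) = q(\gamma p_{n-1})(\alpha p_n)$, i.e.\ $w_n(\gamma p_n)w_n^\ast = q\,\gamma p_{n-1}$, i.e.\ $\gamma p_n = q\, w_n^\ast(\gamma p_{n-1})w_n$; iterating down to $n=0$ gives $\gamma p_n = q^n v_n(\gamma p_0)v_n^\ast$, i.e.\ $v_n^\ast\gamma v_n = q^n\gamma p_0$.

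It remains to identify $p_m(\alpha^\ast)^k p_n$. Since $\alpha^\ast$ sends $K_n$ into $K_{n+1}$, this operator vanishes unless $m=n+k$; when $m=n+k$, repeatedly using $\alpha^\ast p_j = (1-q^{2(j+1)})^{1/2}w_{j+1}^\ast$ (immediate from the previous paragraph) and the fact that $w_{j+1}^\ast$ has range in $K_{j+1}$ yields, by induction on $k$,
\[
(\alpha^\ast)^k p_n = \Big(\prod_{j=n+1}^{m}(1-q^{2j})\Big)^{1/2} w_m^\ast w_{m-1}^\ast\cdots w_{n+1}^\ast = c_q(m,n)\, w_m^\ast\cdots w_{n+1}^\ast
\]
(for $k=0$ this is just $p_n$, consistent with reading $c_q(n,n)$ as the empty product $1$). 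Finally, since $v_n^\ast = w_1\cdots w_n$ and $w_n^\ast\cdots w_1^\ast w_1\cdots w_n = v_nv_n^\ast = p_n$, one has the telescoping identity $w_m^\ast\cdots w_{n+1}^\ast = (w_m^\ast\cdots w_1^\ast)(w_1\cdots w_n) = v_m v_n^\ast = v_{m,n}$, and the asserted formula $p_m(\alpha^\ast)^k p_n = \delta_{n+k,m}\,c_q(m,n)\,v_{m,n}$ follows.

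Essentially everything after the first paragraph is a routine manipulation of a finite family of mutually orthogonal unitaries, so I expect the only genuine point to be the first paragraph: namely, showing that $\alpha$ and $\alpha^\ast$ carry each $K_n$ \emph{entirely} into a single subspace $K_j$, with nothing landing in $\ker\gamma$ or in another spectral subspace. That is exactly what the \emph{exact} intertwining relation $\alpha(\gamma^\ast\gamma) = q^2(\gamma^\ast\gamma)\alpha$ together with the explicit description of $\sigma(\gamma^\ast\gamma)$ in Lemma~\ref{Lem=GammaSpec} provides — so that $\alpha\xi$ is an honest eigenvector rather than a vector with merely controlled spectral support — and without that input the weighted‑shift picture, and with it the whole proof, would break down.
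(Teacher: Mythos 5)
Your proof is correct and follows essentially the same route as the paper's: both identify $p_n$ with the spectral projection of $\gamma^\ast\gamma$ at the isolated eigenvalue $q^{2n}$, use the commutation relation $\alpha\,\gamma^\ast\gamma = q^2\,\gamma^\ast\gamma\,\alpha$ to see that $\alpha^\ast$ acts as a weighted unilateral shift along the $K_n$, and assemble $v_n$ as a product of the resulting unitaries $K_j \rightarrow K_{j+1}$. The only (cosmetic) difference is that the paper extracts these unitaries from the polar decomposition $\alpha^\ast = u\vert\alpha^\ast\vert$, whereas you normalize $\alpha p_n$ directly by the scalar $(1-q^{2n})^{-1/2}$.
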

\begin{proof}
	Let $\alpha^\ast = u \vert \alpha^\ast \vert$ be the polar decomposition of $\alpha^\ast$ and set $u_n = u p_n$. The operators  $\alpha \alpha^\ast$ and $\gamma \gamma^\ast$ commute and hence generate a commutative C$^\ast$-algebra. $\gamma$ is contractive, c.f. Lemma \ref{Lem=GammaSpec}, so that by the relation $\alpha \alpha^\ast + q^2 \gamma^\ast \gamma = 1$ we see that the support projection $u^\ast u = \supp (\alpha \alpha^\ast) = 1$. By the relation $\alpha^\ast \alpha +  \gamma^\ast \gamma = 1$ we see that   $u u^\ast = \supp(\alpha^\ast \alpha)  = 1 - p_0$.

	Fix now $n \in \N$. We get $u_n^\ast u_n = p_n u^\ast u p_n = p_n$. So that $u_n$ is a partial isometry with range projection $p_n$. Further,  
	\begin{equation}\label{Eqn=GammaCom}
	\begin{split}
	\gamma^\ast \gamma u p_n = \gamma^\ast \gamma \alpha^\ast \vert \alpha^\ast \vert^{-1} p_n = q^2  \alpha^\ast  \gamma^\ast \gamma  \vert \alpha^\ast \vert^{-1} p_n = q^2  \alpha^\ast  \vert \alpha^\ast \vert^{-1}  \gamma^\ast  \gamma    p_n = q^{2+2n} u   p_n,
	\end{split}
	\end{equation}
	so that $u_n$ maps $p_n H$ into $p_{n+1} H$. Similarly, $u^\ast$ maps $p_{n+1} H$ into $p_n H$ and further $u u^\ast p_{n+1} = (1 - p_0) p_{n+1} = p_{n+1}$. This shows that $u_n: p_n H \rightarrow p_{n+1}H$ is unitary. The operators $\alpha$ and $\gamma^\ast \gamma$ commute with $u_\theta$ so that also $u, p_n$ and $u_n$ commute with $u_\theta$. So $u_n^\ast u_\theta u_n = u_\theta p_n$. Further $u_n^\ast \gamma u_n = q \gamma p_n$ by essentially the same computation as \eqref{Eqn=GammaCom}. 
	Setting $v_n = u_{n-1} \cdots  u_0$ then completes the proof of the first statement.

	Now, to show $p_m (\alpha^\ast)^k p_n = \delta_{n+k, m} c_q(m,n) v_{m,n}$ it suffices by induction to show this for $k = 1$.  We already concluded that $\vert \alpha^\ast \vert$ commutes with $p_n$ and $p_m \alpha^\ast p_n = p_m u \vert \alpha^\ast \vert p_n = p_m u  p_n \vert \alpha^\ast \vert = \delta_{n+1,m} \vert \alpha^\ast \vert p_n$. Further $\vert \alpha^\ast \vert^2 + q^2 \gamma \gamma^\ast = 1$, so that $\vert \alpha^\ast \vert^2 p_n + q^{2+2n} p_n = p_n$ and we conclude that $\vert \alpha^\ast \vert p_n = \sqrt{1 - q^{2+2n}} p_n$.
\end{proof}

Since $\gamma^\ast \gamma$ and $u_\theta$ commute we see that $p_n$ and $u_\theta$ commute and that the spaces $K_n$ in Lemma \ref{Lem=UniCon} are invariant subspaces for $u_\theta$. Further $q^{-n} \gamma$ restricted to $K_n$ is unitary as its spectrum equals $\mathbb{T}$. This shows that the restrictions of $u_\theta$ and  $q^{-n} \gamma$ to $K_n$ satisfy the relations of the non-commutative torus $\mathbb{T}_\theta$ and that the prescription $v_\theta \mapsto u_\theta p_n$ and $w_\theta \mapsto q^{-n} \gamma p_n$ gives a non-trivial representation $\pi_n$ of $\Pol(\mathbb{T}_\theta)$.  As $\C(\mathbb{T}_\theta)$ is simple, each $\pi_n$ is faithful (this is essentially equivalent to Lemma \ref{lem:Polunique}).  

\begin{cor}
All the representations in the family $\{\pi_n: \Pol(\mathbb{T}_\theta) \to B(K_n), n \in \mathbb{N}\}$ described above  are unitarily conjugate. 	
\end{cor}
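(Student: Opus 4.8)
The plan is to deduce this corollary directly from the intertwining relations already established in Lemma \ref{Lem=UniCon}; no new ingredient is needed, only careful bookkeeping with the partial isometries $v_n$.

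First I would record the elementary facts that are needed. Viewing the unitary $v_n \colon K_0 \to K_n$ of Lemma \ref{Lem=UniCon} as a partial isometry on $H$, we have $v_n^\ast v_n = p_0$ and $v_n v_n^\ast = p_n$, so in particular $p_n v_n = v_n$ and $v_n p_0 = v_n$. I would also note that $\pi_0$ is exactly the representation $v_\theta \mapsto u_\theta p_0$, $w_\theta \mapsto \gamma p_0$, since $q^{0} = 1$. Then, using the identities $v_n^\ast u_\theta v_n = u_\theta p_0$ and $v_n^\ast \gamma v_n = q^n \gamma p_0$ from Lemma \ref{Lem=UniCon}, I would compute
\[
v_n^\ast\, \pi_n(v_\theta)\, v_n = v_n^\ast\, u_\theta p_n\, v_n = v_n^\ast u_\theta v_n = u_\theta p_0 = \pi_0(v_\theta),
\]
\[
v_n^\ast\, \pi_n(w_\theta)\, v_n = q^{-n}\, v_n^\ast\, \gamma p_n\, v_n = q^{-n}\, v_n^\ast \gamma v_n = q^{-n}\, q^n \gamma p_0 = \gamma p_0 = \pi_0(w_\theta).
\]
Since $v_\theta$ and $w_\theta$ generate $\Pol(\mathbb{T}_\theta)$ as a unital $^\ast$-algebra, this gives $\pi_n = \Ad(v_n) \circ \pi_0$ on all of $\Pol(\mathbb{T}_\theta)$, i.e.\ $\pi_n(a) = v_n \pi_0(a) v_n^\ast$ for every $a \in \Pol(\mathbb{T}_\theta)$.

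Finally, for arbitrary $m, n$ the operator $v_m v_n^\ast \colon K_n \to K_m$ is unitary, and the previous step yields $(v_m v_n^\ast)\, \pi_n(a)\, (v_n v_m^\ast) = v_m\, \pi_0(a)\, v_m^\ast = \pi_m(a)$ for all $a \in \Pol(\mathbb{T}_\theta)$, so the $\pi_n$ are mutually unitarily conjugate. I do not anticipate any genuine obstacle here: the only point requiring attention is the partial-isometry arithmetic of the first step, which is why I would isolate it. It is, however, worth emphasising in an accompanying remark that this unitary equivalence is a real consequence of the concrete commutation relations in $\Pol(\bG_q^\theta)$ (via the explicit unitaries $v_n$) and not a formality, since $\C(\mathbb{T}_\theta)$ is not of type $I$ and admits many mutually inequivalent irreducible representations.
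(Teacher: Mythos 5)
Your proposal is correct and follows exactly the route the paper intends: the paper's proof is simply the citation ``This is a consequence of Lemma \ref{Lem=UniCon}'', and your computation with the unitaries $v_n$ (checking conjugation on the generators $v_\theta, w_\theta$ and then composing $v_m v_n^\ast$ for general $m,n$) is precisely the bookkeeping that citation leaves implicit. No discrepancy to report.
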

\begin{proof}
	This is a consequence of Lemma \ref{Lem=UniCon}.
\end{proof}

We are ready to state and prove the main result of this section.

\begin{thm}\label{Thm=CastUniqueGq}
The algebra	$\Pol(  \bG_q^\theta)$ is $\C^\ast$-unique.
\end{thm}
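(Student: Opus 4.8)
The plan is to apply the second statement of Proposition \ref{universal}: since $\widehat{\bG_q^\theta}$ is amenable, $\C(\bG_q^\theta)$ is the universal enveloping $\C^*$-algebra of $\Pol(\bG_q^\theta)$, so it suffices to show that every nonzero closed two-sided ideal of $\C(\bG_q^\theta)$ has nonzero intersection with $\Pol(\bG_q^\theta)$; in fact I will show that every such ideal contains $\gamma$. Equivalently: any injective representation $\pi$ of $\Pol(\bG_q^\theta)$ extends (by amenability) to a representation of $\C(\bG_q^\theta)$ whose kernel, being an ideal not containing $\gamma$ (as $\pi(\gamma)\neq0$), must vanish, so that $\pi$ is isometric for the maximal $\C^*$-norm.

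The heart of the matter is the ideal structure of $\C(\bG_q^\theta)=\C(SU_q(2))\rtimes_{\rho_\theta}\mathbb{Z}$. Let $J$ be the closed ideal of $\C(\bG_q^\theta)$ generated by $\gamma$. Inside the canonical copy of $\C(SU_q(2))$ the element $\gamma$ generates the ideal $\mathcal{I}_0=\ker\bigl(\C(SU_q(2))\to C(\mathbb{T})\bigr)$, the kernel of the classical-point quotient $\alpha\mapsto z$, $\gamma\mapsto0$. By Woronowicz's structure theory of $\C(SU_q(2))$ — or, staying self-contained, by Lemmas \ref{Lem=GammaSpec} and \ref{Lem=UniCon} together with the Corollary above, applied to a faithful representation — one has $\mathcal{I}_0\cong\mathcal{K}(\ell^2(\NN))\otimes C(\mathbb{T})$, this ideal is essential in $\C(SU_q(2))$, and $\C(SU_q(2))/\mathcal{I}_0\cong C(\mathbb{T})$. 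Moreover $\rho_\theta$ leaves $\mathcal{I}_0$ invariant, restricting there to $\id_{\mathcal{K}}\otimes(\text{rotation by }e^{2\pi i\theta})$, and induces the trivial automorphism of the quotient $C(\mathbb{T})$ (since $\rho_\theta$ fixes $\alpha$). Passing to the crossed product gives
\[
J\;\cong\;\bigl(\mathcal{K}\otimes C(\mathbb{T})\bigr)\rtimes\mathbb{Z}\;\cong\;\mathcal{K}\otimes\bigl(C(\mathbb{T})\rtimes\mathbb{Z}\bigr)\;=\;\mathcal{K}\otimes\C(\mathbb{T}_\theta),
\]
which is \emph{simple} because $\C(\mathbb{T}_\theta)$ is (this is the content of Lemma \ref{lem:Polunique}), while $J$ is \emph{essential} in $\C(\bG_q^\theta)$ because essentiality of $\mathcal{I}_0$ persists through the crossed product by $\mathbb{Z}$ via the canonical faithful conditional expectation $\C(\bG_q^\theta)\to\C(SU_q(2))$. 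Finally $\C(\bG_q^\theta)/J\cong C(\mathbb{T})\rtimes_{\mathrm{triv}}\mathbb{Z}\cong C(\mathbb{T}^2)$.

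With this in hand the theorem follows at once: any nonzero closed ideal of $\C(\bG_q^\theta)$ meets the essential ideal $J$ nontrivially, hence — $J$ being simple — contains all of $J$, in particular $\gamma\in\Pol(\bG_q^\theta)$, so by Proposition \ref{universal} the algebra $\Pol(\bG_q^\theta)$ is $\C^*$-unique.

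The main obstacle is the structural claim about $J$ in the second paragraph. Granting the isomorphism $J\cong\mathcal{K}\otimes\C(\mathbb{T}_\theta)$, simplicity is immediate from that of $\C(\mathbb{T}_\theta)$; the isomorphism and the essentiality are exactly what the decomposition $H=\ker\gamma\oplus\bigoplus_n K_n$ of Lemmas \ref{Lem=GammaSpec} and \ref{Lem=UniCon} and the Corollary record (equivalently, they belong to the classical analysis of $\C(SU_q(2))$), so the substantive computations are already in place. A minor point to watch is that Lemma \ref{Lem=GammaSpec} is meant for representations with $\pi(\gamma)\neq0$ — the rest factor through $C(\mathbb{T}^2)$ — which is automatic here. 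If one prefers to bypass essentiality, one can build the $\ast$-homomorphism $\Psi\colon\C(\bG_q^\theta)\to\mathcal{T}\minotimes\C(\mathbb{T}_\theta)$, with $\mathcal{T}$ the Toeplitz algebra, modelling the action on $\bigoplus_n K_n$, observe that $\ker\Psi\subseteq J$ (compose with $\mathcal{T}\to\mathcal{T}/\mathcal{K}\cong C(\mathbb{T})$ and use that the induced map $C(\mathbb{T}^2)=\C(\bG_q^\theta)/J\to C(\mathbb{T})\otimes\C(\mathbb{T}_\theta)$ is injective) while $\ker\Psi\cap J=0$ by simplicity of $J$, so that $\ker\Psi=0$; then for injective $\pi$ one has $\pi|_{\bigoplus_n K_n}=(\pi_{\mathcal{T}}\otimes\pi|_{K_0})\circ\Psi$ with $\pi_{\mathcal{T}}$ the standard faithful representation of $\mathcal{T}$ and $\pi|_{K_0}$ a nonzero, hence faithful, representation of the simple algebra $\C(\mathbb{T}_\theta)$, so $\pi|_{\bigoplus_n K_n}$ — and therefore $\pi$ — is faithful on $\C(\bG_q^\theta)$.
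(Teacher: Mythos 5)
Your proof is correct, but it takes a genuinely different route from the paper's. The paper argues representation-by-representation: it decomposes an arbitrary representation of $\Pol(\bG_q^\theta)$ along the spectral subspaces $K_n$ of $\gamma$ (Lemmas \ref{Lem=GammaSpec} and \ref{Lem=UniCon}), writes a general element explicitly as $\sum_{m,n}\pi_{\mathbb{T}_\theta}(P_m)\otimes c_q(m,n)e_{m,n}$, and then, given two representations, uses simplicity of $\C(\mathbb{T}_\theta)$ to produce a complete isometry $\bigl(\pi^1_{\mathbb{T}_\theta}\circ(\pi^2_{\mathbb{T}_\theta})^{-1}\bigr)\otimes\id_{\ell_2}$ carrying one image onto the other, so that the two norms coincide. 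You instead determine the ideal structure of the universal completion $\C(\bG_q^\theta)=\C(SU_q(2))\rtimes_{\rho_\theta}\bz$ --- an essential simple ideal $J\cong\mathcal{K}\otimes\C(\mathbb{T}_\theta)$ containing $\gamma$, with quotient $C(\mathbb{T}^2)$ --- and feed this into the ideal-intersection criterion of Proposition \ref{universal}. Your structural claims all check out: the ideal of $\C(SU_q(2))$ generated by $\gamma$ is the classical Woronowicz ideal $\mathcal{K}\otimes C(\mathbb{T})$, it is $\rho_\theta$-invariant with $\rho_\theta$ acting as $\id_{\mathcal{K}}\otimes(\textup{rotation by }\theta)$ (the matrix units are built from $\alpha$ and $\vert\gamma\vert$, both fixed by $\rho_\theta$), invariant ideals pass to crossed products, essentiality persists through the crossed product via the faithful conditional expectation, and $(\mathcal{K}\otimes C(\mathbb{T}))\rtimes\bz\cong\mathcal{K}\otimes\C(\mathbb{T}_\theta)$ because the action is trivial on the first leg. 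What each approach buys: the paper's proof is self-contained, never needing the extension $0\to\mathcal{K}\otimes C(\mathbb{T})\to\C(SU_q(2))\to C(\mathbb{T})\to 0$ as external input, whereas yours is shorter once that classical structure is granted and yields strictly more as a by-product, namely the complete ideal lattice of $\C(\bG_q^\theta)$ (in particular that $\C(\bG_q^\theta)$ is \emph{not} just infinite, its proper quotient $C(\mathbb{T}^2)$ being infinite-dimensional). Two cosmetic caveats: extension of a representation of $\Pol(\bG_q^\theta)$ to $\C(\bG_q^\theta)$ needs only that the latter is the universal completion, not amenability; and Lemmas \ref{Lem=GammaSpec} and \ref{Lem=UniCon} as stated concern representations of the full algebra $\Pol(\bG_q^\theta)$ (their proofs use $u_\theta$ and the irrationality of $\theta$ to fill out the circles in $\sigma(\gamma)$), so when citing them to identify $\mathcal{I}_0$ inside $\C(SU_q(2))$ alone you should either apply them to a faithful representation of the whole crossed product --- which is all you need --- or quote the classical $SU_q(2)$ structure theory directly.
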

\begin{proof}
	Let $\pi$ be a representation of $\Pol(\bG_q^\theta)$  on a Hilbert space $H$. As in Lemma \ref{Lem=UniCon} we decompose $H = \oplus_{n=0}^\infty K_n$. We may moreover assume that all Hilbert spaces $K_n$ are isomorphic, and that the respective unitaries  conjugate the actions of  $\Pol(\mathbb{T}_\theta)   \simeq \Pol \langle  q^{-n}\gamma, u_\theta \rangle p_n$ on $K_n,   n \in  \N_0$.  So we assume that $K = K_n$ and hence $H = \oplus_{n=0}^\infty K$. Moreover, there is a single representation $\pi_{\mathbb{T}_\theta}: \Pol(\mathbb{T}_\theta) \rightarrow B(K)$ such that for  $x \in \Pol(\mathbb{T}_\theta)$ under the identification $H = \oplus_{n=0}^\infty K$ we have $\pi(x) = \oplus_{n=0}^\infty \pi_{\mathbb{T}_\theta}(x)$. For each $n \in \N_0$ let, as in Lemma \ref{Lem=UniCon}, $p_n$ be the spectral projection of $\pi(\gamma^\ast \gamma)$ corresponding to the spectral set $q^n \mathbb{T}$. Then  $p_n$ is by construction the projection onto the $n$-th summand in $H = \oplus_{n=0}^\infty K$ and we set $p_{-1} = 0$. We also find from Lemma \ref{Lem=UniCon} that $\pi(\alpha)p_n = p_{n-1} \pi(  \alpha) = \sqrt{1 - q^{2n}} v p_n$ where $v: \oplus_{n=0}^\infty K \rightarrow \oplus_{n=0}^\infty K$ is the backwards shift  $(\xi_n)_{ n\in \N_0 } \mapsto (\xi_{n+1})_{n\in \N_0}$. In particular, it follows that 
	\[
	\begin{split}
	   p_k \pi(\alpha^m) p_n = & \delta_{n-m,k} c_q(n,k), \qquad m\geq 0, \\
	   p_k \pi( (\alpha^\ast)^m) p_n = & \delta_{n+m,k} c_q(k,n), \qquad m > 0. 
	\end{split}
	\]
	with $c_q(k,n)$ as described in \eqref{Eqn=Cqn} and $c_q(n,n) = 1$. For simplicity write $\alpha^{m}$ for $(\alpha^\ast)^{-m}$ in case $m < 0$.  Also set $c_q(m,n) = c_q(n,m)$ in case $m < n$. Let $Q = \sum_{m = -\infty}^\infty P_m(u_\theta, \gamma, \gamma^\ast) \alpha^m$ with $P_m(u_\theta, \gamma, \gamma^\ast)$  a linear combination of basis vectors $u_\theta^k (\gamma^\ast)^i \gamma^j$ where $k \in \mathbb{Z}, i,j \in \mathbb{N}_0$.  We may identify $B(H)$ with $B(K)  \otimes B(\ell_2)$ and then we see that
	\[
	\pi(Q) =
	\sum_{m,n = 0}^\infty
	\pi_{\mathbb{T}_\theta}( P_m(u_\theta, \gamma, \gamma^\ast)  )  \otimes  c_q(m,n) e_{m,n} \in  B(K)  \otimes B(\ell_2).
	\]
	
	Suppose now that we have two representations $\pi_1$ and $\pi_2$ of $\Pol(\bG_q^\theta)$, on $H^1$ and $H^2$ respectively. Then we get decompositions $H^1 = \oplus_{n=1}^\infty K^1$ and $H^2 = \oplus_{n=1}^\infty K^2$ with representations $\pi^1_{\mathbb{T}_\theta}$ and $\pi^2_{\mathbb{T}_\theta}$ of $\Pol(\mathbb{T}_\theta)$ on $K^1$ and $K^2$ respectively, such that,
	\begin{equation}\label{Eqn=PiImage}
	\pi^i(Q) = 
	\sum_{m,n = 0}^\infty
	\pi_{\mathbb{T}_\theta}^i( P_m(u_\theta, \gamma, \gamma^\ast)  )  \otimes  c_q(m,n) e_{m,n} \in  B(K^i)  \otimes B(\ell_2).
	\end{equation}
	Since $\C(\mathbb{T}_\theta)$ is simple we find that 
	\[
	\pi^1_{\mathbb{T}_\theta}(\C(\mathbb{T}_\theta)) \simeq_{ \pi^1_{\mathbb{T}_\theta} }\C(\mathbb{T}_\theta) \simeq_{\pi^2_{\mathbb{T}_\theta}} \pi^2_{\mathbb{T}_\theta}(\C(\mathbb{T}_\theta))
	\]
	 are isomorphic. From \eqref{Eqn=PiImage} we see that the complete isometry  $( \pi^1_{\mathbb{T}_\theta} \circ (\pi^2_{\mathbb{T}_\theta})^{-1} \otimes \id_{\ell_2})$ maps $\pi^1(\Pol(\bG_q^\theta))$ bijectively to  $\pi^2(\Pol(\bG_q^\theta))$. Therefore the norms on $\pi^i(\Pol(\bG_q^\theta))$ with $i=1,2$ are equal and we conclude that $\Pol(\bG_q^\theta)$ is $\C^\ast$-unique. 	
\end{proof}

\begin{thm}
	The discrete quantum group $\widehat{\bG_q^\theta}$ is $\C^*$-unique and not locally finite.
\end{thm}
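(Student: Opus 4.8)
The plan is to obtain both halves of the statement as essentially immediate consequences of what is already in hand. For $\C^\ast$-uniqueness I would simply unwind the conventions fixed at the beginning of Section \ref{Sect=QG}: there $\bc[\widehat{\bG_q^\theta}] = \Pol(\bG_q^\theta)$, so the assertion that $\widehat{\bG_q^\theta}$ is $\C^\ast$-unique is, verbatim, the content of Theorem \ref{Thm=CastUniqueGq}, namely that $\Pol(\bG_q^\theta)$ carries a unique pre-$\C^\ast$-norm. Since $\widehat{\bG_q^\theta}$ was already observed to be amenable (via crossed products by $\bz$ and Theorem \ref{amenableknown}), this is moreover equivalent to $\C^\ast_r$-uniqueness, but nothing extra is needed for the claim as stated.

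For the failure of local finiteness I would invoke the equivalence of local finiteness with condition (iii) of the lemma characterising local finiteness above, namely that every finite subset of $\Pol(\bG_q^\theta)$ be contained in a finite-dimensional unital $^\ast$-subalgebra. It therefore suffices to exhibit a single element whose generated unital $^\ast$-subalgebra is infinite-dimensional, and $\gamma \in \Pol(\bG_q) \subseteq \Pol(\bG_q^\theta)$ does the job: the monomials $\gamma^m(\gamma^\ast)^n$, $m,n \in \N_0$, occur among the linear basis $\alpha^k \gamma^m (\gamma^\ast)^n u_\theta^l$ of $\Pol(\bG_q^\theta)$, hence are linearly independent, so the (commutative, as $\gamma$ is normal) $^\ast$-algebra generated by $\gamma$ is infinite-dimensional; equivalently, by Lemma \ref{Lem=GammaSpec} the spectrum of $\gamma$ in any faithful representation is the infinite set $\{0\}\cup\bigcup_{n\in\N_0} q^n\mathbb{T}$. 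Thus the singleton $\{\gamma\}$ lies in no finite-dimensional $^\ast$-subalgebra, and $\widehat{\bG_q^\theta}$ is not locally finite. As a sanity check one may also argue categorically: $\Pol(\bG_q)$ is a Hopf-$^\ast$-subalgebra of $\Pol(\bG_q^\theta)$, so $\widehat{SU_q(2)}$ is a discrete quantum subgroup of $\widehat{\bG_q^\theta}$; local finiteness passes to quantum subgroups (a $^\ast$-subalgebra of a finite-dimensional $^\ast$-algebra is finite-dimensional), and $\widehat{SU_q(2)}$ is not locally finite, its fusion ring being the infinite representation ring of $SU(2)$, generated by the fundamental representation.

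I do not expect any real obstacle here: this theorem is a packaging of Theorem \ref{Thm=CastUniqueGq} with the elementary observation above, and all the genuine difficulty was already absorbed into the proof of Theorem \ref{Thm=CastUniqueGq}. The only point demanding a moment's attention is the bookkeeping with duality conventions — making sure that "$\widehat{\bG_q^\theta}$ is $\C^\ast$-unique" refers to pre-$\C^\ast$-norms on $\Pol(\bG_q^\theta)$, and that "local finiteness of $\widehat{\bG_q^\theta}$" is the condition on finite subsets of $\Pol(\bG_q^\theta)$, equivalently on the fusion ring $\Irr(\bG_q^\theta)$, as in Definition \ref{localfin}.
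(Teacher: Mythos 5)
Your proposal is correct and matches the paper's argument: $\C^*$-uniqueness is quoted directly from Theorem \ref{Thm=CastUniqueGq}, and non-local-finiteness is witnessed by a single element generating something infinite (the paper uses the group-like unitary $u_\theta$, whose associated quantum subgroup is an infinite copy of $\bz$, while you use $\gamma$ via condition (iii) of the characterising lemma -- an equally valid and equally elementary witness). No gaps; the only difference is the choice of witness, which does not change the nature of the argument.
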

\begin{proof}
C$^\ast$-uniqueness is proved in Theorem \ref{Thm=CastUniqueGq}. This quantum group is not locally finite as the quantum subgroup generated by $u_\theta$ is not finite. In fact one may check that any non-empty choice of finitely many generators of the fusion ring of $\widehat{\bG_q^\theta}$ generates an infinite quantum group.   	
\end{proof}

\subsection*{Acknowledgements} Key work on this paper was done during the visit of the second author to TU Delft in November 2018; the hospitality of the mathematics department is gratefully acknowledged.
The second author was also partially supported by the National Science Center (NCN) grant no.~2014/14/E/ST1/00525.  We thank the referee for careful reading of our manuscript and thoughtful comments.

\end{document}